\definecolor{webgreen}{rgb}{0,.5,0}
\definecolor{webbrown}{rgb}{.6,0,0}
\newcommand{\seqnum}[1]{\href{https://oeis.org/#1}{\rm \underline{#1}}}
\begin{document}


\theoremstyle{plain}
\newtheorem{theorem}{Theorem}
\newtheorem{corollary}[theorem]{Corollary}
\newtheorem{lemma}[theorem]{Lemma}
\newtheorem{proposition}[theorem]{Proposition}
\newtheorem{remarks}[theorem]{Remarks}

\theoremstyle{definition}
\newtheorem{definition}[theorem]{Definition}
\newtheorem{example}[theorem]{Example}
\newtheorem{conjecture}[theorem]{Conjecture}

\theoremstyle{remark}
\newtheorem{remark}[theorem]{Remark}

        \newcommand{\Gandhar}[1]{\marginpar{\small \raggedright \textcolor{orange}{{\bfseries Gandhar:} #1}}}
        \newcommand{\gandhar}[1]{\textcolor{orange}{ #1}}
\begin{center}
\vskip 1cm{\LARGE\bf On Cloitre's hiccup sequences
}
\vskip 1cm
\large
Robbert Fokkink\\
Technical University Delft\\ Faculty of Mathematics\\ Mekelweg 4, 2628CD\\
The Netherlands\\
\href{mailto:email}{\tt r.j.fokkink@tudelft.nl} \\
{\small
\href{https://orcid.org/0000-0001-8347-4110}{ORCID: 0000-0001-8347-4110}}\\
\vskip 0.2cm
Gandhar Joshi\\
School of Mathematics and Statistics\\ 
The Open University\\
Walton Hall\\
Milton Keynes, MK7 6AA\\ 
UK\\
\href{mailto:gandhar.joshi@open.ac.uk}{\tt gandhar.joshi@open.ac.uk}\\
{\small
\href{https://orcid.org/0000-0002-6960-182X}{ORCID: 0000-0002-6960-182X}}
\end{center}

\vskip .2 in
\begin{abstract}
In 2003, Benoit Cloitre entered a family of sequences in the OEIS that we call \emph{hiccup} sequences.
We collect the various claims, observations, and proofs of properties of these
sequences that have been entered in the OEIS over the years, and present a unified approach,
inspired by a remarkable theorem of Bosma, Dekking, and Steiner.
\end{abstract}


Aronson's sequence 
\[
1, 4, 11, 16, 24, 29, 33, 35, 39, 45, 47, 51, 56, 58, 62, 64, \ldots,
\]
is entry \seqnum{A005224} in the On-Line Encyclopedia of Integer Sequences.
It comes from the self-referential sentence
\begin{center}
\emph{T is The firsT, fourTh, elevenTh, sixTeenTh, ... letter in this sentence,}    
\end{center}
which keeps track of the positions of the $T$'s.
Spaces and punctuation are ignored. In the English language, ordinal numbers, with the notable exception of second,
contain enough $T$'s to keep the sentence going on forever. 
Douglas Hofstadter~\cite{hofstadter2008}
attributed this sentence to the British clinical pharmacologist Jeffrey Aronson. 

The words in Aronson's sentence generate numbers
that generate words. It is a Munchausen manner to produce an integer sequence, and
there are many different ways in which one can vary on this theme.
Cloitre, Sloane, and Vandermast~\cite{cloitre2003} considered several methods to generate self-referential sequences.
We are interested in one specific type.
Twenty years ago, Benoit Cloitre entered several sequences of this type
in the OEIS. One of these is \seqnum{A086398}, and its first entries are
\begin{equation}\label{eq:1}
1, 5, 7, 9, 11, 15, 17, 21, 23, 27, 29, 33, 35, 37, 39, 43, 45, 49, 51, 53,\ldots .
\end{equation}
where $a(n)=a(n-1)+4$ if $n-1$ is already in the sequence, and otherwise $a(n)=a(n-1)+2$,
starting from $a(1)=1$. Actually, the definition in the OEIS is slightly different from ours,
but it is not hard to check that it is equivalent. The differences 
$a(n)-a(n-1)$ take two values that appear in some sort of rhythm, just like
bouts of hiccups~\cite[p96]{hiccup}, and that is why we call them \emph{hiccup sequences}.
We have collected many hiccup sequences from the OEIS in Table~\ref{tbl:1} below.

If we vary the definition of \seqnum{A086398} a bit 
and put $a(n)=a(n-1)+4$ if $n$ is in the sequence and $a(n)=a(n-1)+2$
otherwise,
then we get \seqnum{A080903}, which has a lot of overlap with \seqnum{A086398}
\begin{equation}\label{eq:2}
1, 3, 7, 9, 11, 13, 17, 19, 23, 25, 29, 31, 35, 37, 39, 41, 45, 47, 51, 53,\ldots .
\end{equation}
This overlap of the two sequences will be cleared up in Section~3.
Another hiccup sequence in the OEIS is~\seqnum{A064437}. Robert Israel empirically found a closed formula for this sequence:
\begin{equation}\label{eq:3}
a(n)=\left\lceil\left(1+\sqrt{2}\right)(n-1)+\frac{1}{2+\sqrt{2}}\right\rceil    
\end{equation}
and checked its validity for the first hundred entries. 
The reader may recognize that this is a non-homogeneous Beatty sequence~\cite{beatty} with slope
$1+\sqrt 2$ and offset $\frac{1}{2+\sqrt{2}}$. Similar formulas can be found on the
OEIS for other hiccup sequences.

It is possible to define hiccup sequences that are not increasing, or have first differences
that take more than two values. There are plenty of such hiccup sequences in the OEIS as well.
For example, the hiccup sequence \seqnum{A080782} produces a permutation of the natural numbers.
Other examples are \seqnum{A079354}, \seqnum{A080458}, \seqnum{A080900},
and \seqnum{A111202}.
A hiccup sequence with three gaps is \seqnum{A080574}.
These are all very interesting sequences, but we
do not consider them in this paper. 
We only consider hiccup sequences that are strictly increasing.
In a recent paper, Cloitre~\cite{cloitre2025} has revisited his sequences
and also considers non-decreasing hiccup sequences.

Our paper is organized as follows. In section 1, we 
define the $(k,x,y,z)$-hiccup sequences and
give an overview of all the results and
claims that were entered into the OEIS.
We prove that hiccup sequences are morphic in section 2. 
We use this result in section 3 to retrieve
Cloitre's recent theorem that hiccup sequences are Beatty sequences if $|y-z|=1$ 
under certain further restrictions on the parameters. 
It turns out that $(0,x,1,z)$-sequences
are examples of morphic non-automatic sequences, which feature in a recent overview article
by 
Allouche, Shallit, and Yassawi~\cite{allouche2022}. 
We move beyond Beatty sequences and settle a conjecture
of Kimberling, combining the automatic theorem prover \texttt{Walnut}~\cite{mousavi}
with results of Ollinger and others on Dumont-Thomas numeration~\cite{carton2025,licofage}.
In section 4 we generalize a conjecture of Bosma, Dekking, and Steiner~\cite{bosma2018} on
a connection between continued fractions and Beatty sequences.
We settle a specific instance of that generalized conjecture.

\section{Definition, literature, and OEIS insights}

We call an increasing sequence a \((j, x, y, z)\)-\emph{hiccup sequence} if it starts at \(x\), and from then on, you add \(y\) if the 
number \(n - j\) already showed up in the sequence, and \(z\) if it didn’t.

\begin{definition}[\((j, x, y, z)\)-hiccup sequence]
Let \( a(n) \) be a strictly increasing sequence of integers, and let \( j, x \in \mathbb{Z}_{\geq 0} \),
and \( x, y \in \mathbb{Z}_{\geq 1} \) with $y\not=z$.  
We say that \( a(n) \) is the \emph{\((j, x, y, z)\)-hiccup sequence} if it satisfies:
\begin{align*}
a(1) &= x, \\
a(n) &= 
\begin{cases}
a(n - 1) + y, & \text{if } n - j \in \{a(k)\}_{k < n}, \\[4pt]
a(n - 1) + z, & \text{otherwise},
\end{cases}
\quad \text{for } n \geq 2.
\end{align*}
\end{definition} 
In a recent paper, Cloitre~\cite{cloitre2025} considers $(0,x,y,z)$-hiccup sequences
in which $y$ or $z$ can be zero. Our results partially overlap Cloitre's, 
but are obtained by a different approach.

\begin{table}[h!]
\centering
\small
\begin{tabular}{|c|c|c|c|c||c|c|c|c|c|}
\hline
\textbf{OEIS} & $j$ & $x$ & $y$ & $z$ &
\textbf{OEIS} & $j$ & $x$ & $y$ & $z$ \\
\hline
\seqnum{A004956} &0& 2 & 2 & 1 &
\seqnum{A081841} &0& 0 & 3 & 2 \\
\seqnum{A007066} &0& 1 & 2 & 3 &
\seqnum{A081840} &0& 0 & 3 & 4 \\
\seqnum{A045412} &0& 3 & 1 & 3 &
\seqnum{A081842} &0& 0 & 4 & 3 \\
\seqnum{A064437} &0& 1 & 3 & 2 &
\seqnum{A081839} &0& 0 & 4 & 5 \\
\seqnum{A080578} &0& 1 & 1 & 3 &
\seqnum{A081843} &0& 0 & 5 & 4 \\
\seqnum{A080579} &0& 1 & 1 & 4 &
\seqnum{A080578} &0& 1 & 1 & 3 \\
\seqnum{A080580} &0& 1 & 2 & 4 &
\seqnum{A080579} &0& 1 & 1 & 4 \\
\seqnum{A080590} &0& 1 & 3 & 4 &
\seqnum{A007066} &0& 1 & 2 & 3 \\
\seqnum{A080600} &0& 4 & 4 & 3 &
\seqnum{A080580} &0& 1 & 2 & 4\\
\seqnum{A080652} &0& 2 & 3 & 2 &
\seqnum{A064437} &0& 1 & 3 & 2 \\
\seqnum{A080667} &0& 3 & 4 & 3 &
\seqnum{A080590} &0& 1 & 3 & 4  \\
\seqnum{A080903} &0& 1 & 4 & 2 &
\seqnum{A080903} &0& 1 & 4 & 2 \\
\seqnum{A081834} &0& 1 & 4 & 3 &
\seqnum{A081834} &0& 1 & 4 & 3 \\
\seqnum{A081835} &0& 1 & 5 & 4 &
\seqnum{A081835} &0& 1 & 5 & 4 \\
\seqnum{A081839} &0& 0 & 4 & 5 &
\seqnum{A004956} &0& 2 & 2 & 1\\
\seqnum{A081840} &0& 0 & 3 & 4 &
\seqnum{A080652} &0& 2 & 3 & 2 \\
\seqnum{A081841} &0& 0 & 3 & 2 &
\seqnum{A045412} &0& 3 & 1 & 3 \\
\seqnum{A081842} &0& 0 & 4 & 3 &
\seqnum{A080667} &0& 3 & 4 & 3 \\
\seqnum{A081843} &0& 0 & 5 & 4 &
\seqnum{A080600} &0& 4 & 4 & 3 \\
\seqnum{A086377} &1& 1 & 3 & 2 &
\seqnum{A086377} &1& 1 & 3 & 2\\
\seqnum{A086398} &1& 1 & 4 & 2 &
\seqnum{A086398} &1& 1 & 4 & 2 \\
\hline
\end{tabular}

\caption{Table of $(j,x,y,z)$-hiccup sequences that are recognized as
such in the OEIS. 
Almost all parameters are $j=0, x=1$, and $|y-z|=1$. The left-hand
panel is ordered according to the OEIS number. The right-hand panel is
lexicographic in the parameters.
All sequences with the exception of the first two were entered by Benoit Cloitre over
the course of 2003.}\label{tbl:1}
\end{table}

A hiccup sequence is trivial if $x=z=1$. 
Consider, for example, the $(0,1,y,1)$-hiccup sequence for an arbitrary $y>1$. 
What is $a(2)$? At index~$2$, the only number already in the 
sequence is $a(1)=1$ and the next number is $a(3)=3$, etc. The~$y$
plays no role.
This problem does not arise if $x>1$,
as is illustrated by the first entry in Table~\ref{tbl:1}. It
is the only hiccup sequence with $z=1$ in the OEIS.

\medbreak

The literature on hiccup sequences is limited. Apart from Cloitre's recent
paper, the only study is
 by Bosma, Dekking, and Steiner~\cite{bosma2018}. It concerns
the $(1,1,3,2)$-sequence. Its
first few entries are
\[
1, 4, 6, 8, 11, 13, 16, 18, 21, 23, 25, 28, 30, 33, 35, 37, 40, 42, 45, 47,\ldots
\]
Bosma, Dekking, and Steiner proved that there are different ways to characterize this sequence,
that at first sight seem to be unrelated.
The aim of our paper is to generalize their remarkable result to hiccup sequences
that are strictly increasing.

\begin{theorem}[Bosma--Dekking--Steiner]\label{thm:1}
The sequence {\seqnum{A086377}} can be characterized in the following equivalent ways:
\begin{enumerate}[label=\textnormal{(\arabic*)}, leftmargin=1.2cm]
    \item \textbf{\textup{Hiccup sequence}} -- Defined recursively by
    \[
    a(n) = 
    \begin{cases}
        a(n-1) + 2, & \text{if } n-1 \notin \{a(k)\}_{k < n}, \\[4pt]
        a(n-1) + 3, & \text{if } n-1 \in \{a(k)\}_{k < n},
    \end{cases}
    \]
    with initial condition $a(1) = 1$.

    \item \textbf{\textup{Morphic characterization}} -- The positions of $1$'s in the fixed point of the morphism
    \[
    \begin{aligned}
    0 &\mapsto 10, \\
    1 &\mapsto 100.
    \end{aligned}
    \]

    \item \textbf{\textup{Beatty sequence representation}} --
    \[
    a(n) = \left\lfloor (1 + \sqrt{2})n - \frac{\sqrt{2}}{2} \right\rfloor.
    \]

    \item \textbf{\textup{Iterated function sequence}} --
    \[
    a(n) = \left\lfloor r_n + \frac{1}{2} \right\rfloor,
    \]
    where $r_1 = \dfrac{4}{\pi}$ and
    \[
    r_{n+1} = f_n(r_n), \quad \text{with } f_n(x) = \frac{n^2}{x + 1 - 2n}.
    \]
\end{enumerate}
\end{theorem}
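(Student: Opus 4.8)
The plan is to treat the morphic description (2) as the hub and to route every other characterization through it, since the substitution already encodes the self-referential rhythm of the hiccup rule. Let $w = w_1 w_2 \cdots$ be the fixed point of $\sigma : 0 \mapsto 10,\ 1\mapsto 100$ beginning with $1$, and let $b_n$ denote the position of the $n$-th occurrence of the letter $1$. Because every image $\sigma(0)=10$ and $\sigma(1)=100$ begins with a single $1$ followed only by $0$'s, the word $w$ factors uniquely as a concatenation of blocks $\sigma(w_1)\sigma(w_2)\cdots$, the $k$-th block having length $2$ or $3$ according as $w_k=0$ or $w_k=1$. Hence the $1$'s of $w$ sit exactly at the block boundaries, and the gap to the next $1$ is $b_{n+1}-b_n = 2 + w_n$. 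Since $w_n = 1$ is by definition the statement $n \in \{b_k\}$, this reads $b_{n+1}-b_n = 3$ iff $n \in \{b_k\}$, which is precisely the hiccup recurrence of part (1) after the identification $a = b$; a short monotonicity check ($a$ increasing, $a(k)\ge k$) guarantees that testing $n-1 \in \{a(k)\}_{k<n}$ is the same as testing membership in the whole sequence. This yields (1)$\Leftrightarrow$(2) with essentially no computation beyond the block decomposition.

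For (2)$\Leftrightarrow$(3) I would invoke the theory of Sturmian substitutions. The incidence matrix of $\sigma$ is $\left(\begin{smallmatrix}1&2\\1&1\end{smallmatrix}\right)$, with Perron eigenvalue $1+\sqrt2$ and letter frequency $\mathrm{freq}(1)=\sqrt2-1$, so the average gap of $w$ is $1+\sqrt2$, matching the slope in (3). The first-difference word of any Beatty sequence $\lfloor\beta n+\gamma\rfloor$ with $\beta=1+\sqrt2$ is a mechanical (Sturmian) word over $\{2,3\}$ of slope $\{\beta\}=\sqrt2-1$; since $\sqrt2-1=[0;\overline{2}]$ has a purely periodic continued fraction of period one, the associated characteristic Sturmian word is the fixed point of a single substitution, which one identifies with $\sigma$ under the coding $2\leftrightarrow 0,\ 3\leftrightarrow 1$. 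It then remains to fix the intercept: matching $a(1)=1$, i.e.\ aligning the phase of the mechanical word with that of $w$, pins down $\gamma=-\sqrt2/2$ and upgrades the asymptotic density statement to the exact floor formula.

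The genuinely hard part is (4), and I expect it to be the main obstacle. The recurrence $r_{n+1}=n^2/(r_n+1-2n)$ is a Möbius (Riccati) iteration, so I would linearize it through the matrix cocycle $\left(\begin{smallmatrix}0&n^2\\1&1-2n\end{smallmatrix}\right)$, converting it into the three-term linear recurrence $q_{n+1}=(1-2n)q_n+(n-1)^2q_{n-1}$, with $r_n$ recovered from consecutive values of $q$. Solving this recurrence in closed form is where the constant $4/\pi$ should enter: the coefficients $2n-1$ and $(n-1)^2$ are of Wallis type, and I would expect the bounded solution to be expressible through a $\pi$-valued product, with $r_1=4/\pi$ being precisely the value that places the iteration on that solution. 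The real difficulty is analytic rather than algebraic: one computes $f_n'(r_n)\to-(3+2\sqrt2)$, so the map is uniformly \emph{expanding} along the orbit, and a naive induction on the real iterates will not close. I would therefore aim for an exact expression for $r_n$ and then prove the uniform rounding estimate $|r_n-a(n)|<\tfrac12$ for all $n$, controlling the fractional parts $\{(1+\sqrt2)n-\sqrt2/2\}$ so that they never reach the rounding threshold. Establishing this bound rigorously, rather than merely verifying it numerically as the OEIS entries do, is the crux of characterization (4).
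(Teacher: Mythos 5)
You should first note that the paper does not actually prove Theorem~\ref{thm:1}: it is quoted from Bosma--Dekking--Steiner~\cite{bosma2018}, and the paper's own contribution is to generalize each characterization separately (Sections 2--4). Measured against that machinery, your (1)$\Leftrightarrow$(2) is correct and is exactly the block-decomposition argument the paper uses for \seqnum{A086398} and generalizes in Lemma~\ref{lem:01yz}: each block $\sigma(w_n)$ begins with the unique $1$ it contains, so $b_{n+1}-b_n = 2+w_n$, and $w_n=1$ iff $n\in\{b_k\}$. Your (2)$\Leftrightarrow$(3) also follows the paper's route (mechanical words of Sturmian fixed points, then conversion to a Beatty sequence as in the paper's Lemma adapted from \cite[Lemma 1]{bosma2018}), with one imprecision: matching the single value $a(1)=1$ does \emph{not} pin down the intercept --- any finite set of values constrains $\gamma$ only to an interval. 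The intercept is determined either by the fixed point of the composite transformations $T_E, T_L, T_R$ associated with the decomposition of $\sigma$ into Sturmian generators (this is precisely what the paper's Section 3 computes), or by an equidistribution argument over all $n$; your phrase ``aligning the phase'' gestures at this but does not supply the mechanism. This is fixable, so (1)--(3) are essentially sound.

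The genuine gap is in (4), and you half-acknowledge it yourself. Your diagnosis of the difficulty is right --- $f_n'(r_n)\to -(3+2\sqrt{2})$, so forward induction on the real orbit cannot close --- but your proposed remedy (a closed form for the linearized recurrence $q_{n+1}=(1-2n)q_n+(n-1)^2 q_{n-1}$ via Wallis-type products, with $4/\pi$ emerging from the bounded solution) is left entirely conjectural: you never exhibit the bounded solution, never show that $r_1=4/\pi$ places the orbit on it, and never prove positivity of all iterates or any quantitative error bound, so nothing in your outline actually establishes (4). The mechanism used in \cite{bosma2018}, and summarized in the paper's Section 4, runs in the opposite direction: $r_1=4/\pi$ is Euler's continued fraction for the arctangent, the $r_n$ are its \emph{remainders} (tails), satisfying the shift relation $r_n = 2n-1 + n^2/r_{n+1}$ of Equation~\ref{eq:gaussmap}; working backwards from the tails (where the M\"obius maps contract rather than expand) gives both the uniqueness of the positive orbit and the one-sided estimate $r_n - \bigl((1+\sqrt{2})n - \tfrac12 - \tfrac{\sqrt{2}}{2}\bigr) \to 0^{+}$ at rate $O(1/n)$, with $\beta=-\frac{1+\alpha}{2\alpha-2}$ recovered by expanding the shift relation as the paper does. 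The final rounding step then needs exactly the fractional-part lower bound you flag (bad approximability of $\sqrt{2}$ keeps $\{(1+\sqrt{2})n-\tfrac{\sqrt{2}}{2}\}$ at distance $\gg 1/n$ from $1$), so you have correctly located the crux but not the tool --- the missing idea is the identification of $r_n$ as continued-fraction tails, not a closed form.
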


We will refer to Theorem~\ref{thm:1} as the\textit{ BDS theorem}. The sections in our
paper are numbered according to the characterizations of this theorem. In each
section we extend the BDS theorem to more general hiccup sequences.
\medbreak

We highlight some of the scattered comments on hiccup sequences that can be found on the OEIS.
The $(0,1,2,3)$-hiccup sequence is entry \seqnum{A007066} with first few numbers
\[
1, 4, 7, 9, 12, 15, 17, 20, 22, 25, 28, 30, 33, 36, 38, 41, 43,\ldots
\]
It is entered in the OEIS as a Beatty sequence
\[
a(n)=\left\lceil (n-1)\cdot\phi^2 + 1\right\rceil,
\]
where $\phi=\frac{1+\sqrt{5}}2$ is the golden mean. 
Except for its initial entry, 
it is equal to the upper Wythoff sequence \seqnum{A001950} plus two.
It also corresponds to the positions of $1$ of the fixed
point of $0\mapsto 010,\ 1\mapsto 10$ that starts with $1$.
These facts follow from Fraenkel's characterizations
of the upper Wythoff sequence~\cite{fraenkel1982}.

The $(1,1,4,2)$-hiccup sequence \seqnum{A086398} starts out with
\[
1, 5, 7, 9, 11, 15, 17, 21, 23, 27, 29, 33, 35, 37, 39, 43, 45, 49,\ldots.
\]
Kimberling asked if it corresponds to the positions of $1$ in the
unique fixed point of $0\mapsto 10,\ 1\mapsto 1000$. It is not so hard to
see that this is true. The fixed point 
\[\omega=1000101010100010100010\cdots\]
satisfies
\[\omega=\sigma(\omega)=\sigma(1)\sigma(0)\sigma(0)\sigma(0)\sigma(1)\sigma(0)\cdots.\]
The $n$-th $1$ in the sequence corresponds to the $n$-th $\sigma(d)$ for $d=0$ or $d=1$
in $\sigma(\omega)$. It is $4$ ahead of the previous $1$ exactly if the $(n-1)$-th
digit of $\omega$ is a $1$. In other words, it is $4$ ahead if and only if $n-1$ is in
the sequence. 

Kimberling also asked if $-1 < n\left(1 + \sqrt{3}\right) - a(n) < 4$, which we will
confirm in Section~3. There is no
Beatty sequence readily available, unlike in the previous examples.
Since all numbers
in the sequence are odd, we can convert it by $a(n)\mapsto b(n)=(a(n)+1)/2$, which gives \seqnum{A026363}.
Here, Kimberling asks if $-1 < n\left(1+\sqrt{3}\right)/2 - b(n) < 2$, which is a slightly
weaker bound.

The $(0,1,2,4)$-hiccup sequence is entry \seqnum{A080580} in the OEIS.
\[
1, 5, 9, 13, 15, 19, 23, 27, 29, 33, 37, 41, 43, 47, 49,  53, 57, 61,\ldots
\]
Joe Slater proves in a comment on this sequence that it coincides with the positions of $1$ in the 
(non-unique) fixed
point of $0\mapsto 0010,\ 1\mapsto 10$ that starts with $1$. By the same argument, the $n$-th $1$ in this
sequence is $4$ ahead of the previous $1$ exactly if $n$ is not in the sequence. Otherwise,
it is ahead by $2$. We will call this Slater's construction in this paper. 

If we change the $x$ parameter from $2$ to $1$ we get the $(0,1,1,4)$-hiccup sequence,
which is the previous entry \seqnum{A080579}
\[
1, 5, 9, 13, 14, 18, 22, 26, 27, 31, 35, 39, 40, 41, 45, 49, 53, 54, \ldots
\]
in the OEIS. Slater's argument works for the fixed point of $0\mapsto 0001,\ 1\mapsto 1$ that starts
with $10$. If we change $z$ from $4$ to $3$ we get entry \seqnum{A080578}, the $(0,1,1,3)$-hiccup sequence,
which by Slater's construction is the fixed point of $0\mapsto 001,\ 1\mapsto 1$ that starts with $10$,
see also~\cite{ruskey2006}. Interestingly, this sequence is morphic but not automatic, as shown in
example 16 of~\cite{allouche2022}. 

Most hiccup sequences are stand alone, but some have been linked to other seemingly unrelated
sequences. In particular, 
\seqnum{A040412} has been linked to bottom-up search and
\seqnum{A080578} has been linked to meta-Fibonacci sequences.
More details on such connections can be found in Cloitre's recent paper~\cite{cloitre2025}.

\section{Morphisms for hiccup sequences}

An increasing sequence $a(n)$ corresponds to a binary sequence $b(n)$, such that $b(n)=1$
if and only if $n=a(k)$ for some $k$. 
If we interpret the sequence as a set $A=\{a(n)\}\subset \mathbb N$, then    
the \emph{characteristic sequence} is equal to $1$
at position $a$ if $a\in A$ and it is $0$ otherwise.
We will show that the characteristic sequence of a hiccup
sequence is morphic. 
A \emph{morphism} or \emph{substitution} on an alphabet is a formal rule that replaces each letter of the alphabet with a finite word over that same alphabet. 
\begin{definition}\label{def:1}
Let $\Sigma$ and $\Delta$ be finite alphabets. A sequence $(x_n)_{n \geq 1} \in \Delta^{\mathbb{N}}$ is called a \emph{morphic sequence}, see~\cite[Ch. 7]{alloucheshallit}, if there exist:

\begin{itemize}
  \item a morphism $\phi : \Sigma \to \Sigma^*$ such that $\sigma$ has a fixed point $w = \phi^\infty(a)$ for some $a \in \Sigma$, and
  \item a coding $\pi : \Sigma \to \Delta$ (i.e., a letter-to-letter morphism),
\end{itemize}

such that
\[
x_n = \pi(w_n) \quad \text{for all } n \geq 1.
\]
\end{definition}
\noindent We say that an increasing sequence $a(n)$ is morphic if its characteristic
sequence is morphic.

\begin{lemma}\label{lem:01yz}
    A $(0,1,y,z)$-hiccup sequence is morphic.
\end{lemma}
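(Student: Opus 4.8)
The plan is to realise the characteristic sequence $\omega$ of the $(0,1,y,z)$-hiccup sequence directly as a morphic sequence in the sense of Definition~\ref{def:1}, in the spirit of Slater's construction; here $\omega_m=1$ exactly when $m\in A:=\{a(k)\}_k$. The engine of the proof is a self-referential identity special to the case $j=0$: writing $g_n:=a(n)-a(n-1)$ for the $n$-th gap, the recurrence says precisely that $g_n=y$ when $n\in A$ and $g_n=z$ otherwise, i.e.
\[
g_n=y\iff\omega_n=1,\qquad g_n=z\iff\omega_n=0\qquad(n\ge 2).
\]
Thus the length of the $n$-th gap is a \emph{local} function of the $n$-th letter of $\omega$ itself, which is exactly the feature a substitution needs in order to fix $\omega$.

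First I would look for a binary morphism $\sigma$ with $|\sigma(1)|=y$, $|\sigma(0)|=z$, each image carrying a single $1$, and demand that $\omega=\sigma(\omega)=\sigma(\omega_1)\sigma(\omega_2)\cdots$. Under such a $\sigma$ the $n$-th one of $\sigma(\omega)$ sits at position $1+\sum_{m<n}|\sigma(\omega_m)|+o(\omega_n)$, where $o(\cdot)$ is the offset of the $1$ inside each image; equating this with $a(n)$ and differencing turns the fixed-point requirement into exactly the gap identity above. A short four-case analysis over the patterns of $(\omega_{n-1},\omega_n)$ then pins the offsets down and yields, in the regime $z>y$,
\[
\sigma(1)=1\,0^{\,y-1},\qquad \sigma(0)=0^{\,z-y}\,1\,0^{\,y-1},
\]
and in the regime $y>z$ the mirror choice $\sigma(1)=0^{\,y-z}\,1\,0^{\,z-1}$, $\sigma(0)=1\,0^{\,z-1}$ (the case $y=z$ being excluded by hypothesis). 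The internal offset is precisely the bookkeeping that absorbs the one-step phase shift the case $j=0$ would otherwise introduce. For $(y,z)=(2,4)$ this recovers the substitution $1\mapsto10$, $0\mapsto0010$ that is recorded in the OEIS for \seqnum{A080580}.

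The main obstacle, and the only delicate point, is the boundary behaviour at $a(1)=1$, which governs whether $\sigma$ is \emph{prolongable} on the letter $1$ that occupies position $1$. When $z>y\ge 2$ the image $\sigma(1)=1\,0^{\,y-1}$ begins with $1$ and has length $\ge 2$, the offset $o(1)=0$ places the first one at $a(1)=1$, and $\sigma$ is prolongable on $1$; hence $\omega=\sigma^{\infty}(1)$ is literally a fixed point and the identity coding finishes the proof. In the boundary regimes ($y>z$, or $y=1$) the forced offset $o(1)\neq 0$, or the absence of a length-$\ge 2$ image beginning with $1$, means that $\omega$ is a fixed point of $\sigma$ only up to a bounded prepended word (one checks $\sigma(\omega)=0^{\,y-z}\omega$ when $y>z$), or that $\sigma$ is prolongable only on $0\neq\omega_1$. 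To force this into the format of Definition~\ref{def:1} I would adjoin a fresh start symbol $\star$ and, for $y>z$, set $\phi(\star)=\star\,0^{\,z-1}$ with $\phi$ otherwise equal to $\sigma$; then $\phi$ is prolongable on $\star$, its fixed point is $\star\,\omega'$ with $\omega'$ the tail $\omega_2\omega_3\cdots$, and the coding $\pi$ that fixes $0,1$ and sends $\star\mapsto 1$ returns $\pi(\phi^{\infty}(\star))=\omega$. (The genuine-fixed-point subcase $y=1$ is even simpler, since then the tail $\omega'$ is already a pure fixed point of $\sigma$ and $\omega=1\,\omega'$ is morphic by closure under prepending a letter; and if $z=1$ the sequence is the degenerate identity $a(n)=n$, whose characteristic sequence $1^{\infty}$ is trivially morphic.) Such start-symbol-plus-coding normalisations are routine~\cite[Ch.~7]{alloucheshallit}, so $\omega$ is morphic and, by definition, the sequence $a(n)$ is morphic. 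I expect the four-case offset computation and this boundary normalisation to carry all the weight; everything else is immediate from the recurrence.
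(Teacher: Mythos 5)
Your proof is correct and follows essentially the same route as the paper: you derive the identical morphisms $1\mapsto 10^{y-1},\ 0\mapsto 0^{z-y}10^{y-1}$ (for $z>y$) and $0\mapsto 10^{z-1},\ 1\mapsto 0^{y-z}10^{z-1}$ (for $y>z$), and your adjoined start symbol $\star$ with $\phi(\star)=\star 0^{z-1}$ and coding $\star\mapsto 1$ is exactly the paper's letter $b$ in Equation~\eqref{eq:5}. The only cosmetic difference is the $y=1$ subcase, which you settle via closure of morphic sequences under prepending a letter while the paper instead takes the fixed point seeded by $10$; both are valid.
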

\begin{proof}
    We already went over this argument for $z>y$ in the previous section. 
    Define 
    \begin{equation}\label{eq:4}
    \phi(1)=10^{y-1},\ \phi(0)=0^{z-y}10^{y-1}
    \end{equation} 
    and let $(w_i)$ be the
    fixed point with $w_i\in\{0,1\}$ and $w_1w_2=10$. That second letter~$0$
    takes care of case $y=1$.
    Since the sequence is a fixed point, we have $(w_i)=(\phi(w_i))$. 
    The $n$-th $1$ is in $\phi(w_n)$. It is 
    preceded by $y-1$ zeros if and only if the first letter of $\phi(w_n)$
    is $1$. Otherwise, it is preceded by $z-1$ zeros. 
    Therefore, if $a(n)$ is the position of the $n$-th zero, then 
    $a(n)-a(n-1)=y$ if $n$ is in the sequence and 
    $a(n)-a(n-1)=z$ if it is not. Thus, this construction yields a hiccup sequence with the desired properties. Its initial entry is $a(1)=1$. 

    If $z<y$, 
    we add a letter $b$ to the alphabet, and define a morphism on $\{b,0,1\}$
    by 
    \begin{equation}\label{eq:5}
b\mapsto b0^{z-1},\ 0\mapsto 10^{z-1},\ 1\mapsto 0^{y-z}10^{z-1}.
    \end{equation}
    Let $(w_i)$ be the unique fixed point with initial letters $b0$. 
    If we count $b$ as a $1$, i.e., apply a coding $b\mapsto 1$,
    then every code word contains one $1$.  
    By the same argument as before, the $n$-th $1$ is preceded by $y-1$ zeros if 
    $w_n=1$ and by $z-1$ zeros if $w_n=0$.
\end{proof}

The additional letter $b$ in the proof of the lemma, which counts as a special $1$ so to speak, is needed to get the 
right initial condition $a(1)=1$. The same trick works if $a(1)>1$.

\begin{lemma}\label{lem:oxyz}
    A $(0,x,y,z)$-hiccup sequence with $x>1$
    is morphic.
\end{lemma}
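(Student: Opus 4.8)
The plan is to imitate the construction in Lemma~\ref{lem:01yz}, the only genuinely new difficulty being the initial condition $a(1)=x>1$, which forces the characteristic sequence to open with $x-1$ zeros before its first $1$. As in that proof I would split into the two cases $z>y$ and $z<y$, and in each case adjoin a single auxiliary letter $b$ whose image manufactures exactly those $x-1$ leading zeros together with the first $1$. Concretely, for $z>y$ I would set
\[
b\mapsto b\,0^{x-2}10^{y-1},\qquad 0\mapsto 0^{z-y}10^{y-1},\qquad 1\mapsto 10^{y-1},
\]
and for $z<y$
\[
b\mapsto b\,0^{x-2}10^{z-1},\qquad 0\mapsto 10^{z-1},\qquad 1\mapsto 0^{y-z}10^{z-1},
\]
taking in both cases the coding $b\mapsto 0,\ 0\mapsto 0,\ 1\mapsto 1$. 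Since $x>1$ the exponent $x-2\geq 0$ is legitimate, and because $\phi(b)$ begins with $b$ and has length at least two, $\phi$ has a unique fixed point $w$ beginning with $b$.

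I would then run the same bookkeeping as before. Each of the three images $\phi(b),\phi(0),\phi(1)$ contains exactly one symbol that the coding sends to $1$, so under $w=\phi(w)$ the $n$-th $1$ of the coded sequence lies in the block $\phi(w_n)$. Counting the zeros between consecutive $1$'s exactly as in Lemma~\ref{lem:01yz} shows that the gap between the $(n-1)$-th and $n$-th $1$ equals $y$ when $w_n=1$ and $z$ when $w_n=0$. Since the positions of the $1$'s are by definition the values $a(k)$, one has $w_n=1$ if and only if $n\in\{a(k)\}$, and because $a$ is increasing with $a(1)=x>1$ the witnessing index is automatically smaller than $n$; this is precisely the hiccup rule with $j=0$. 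The first block $\phi(b)$ codes to $0^{x-1}10^{y-1}$ (respectively $0^{x-1}10^{z-1}$), which places the first $1$ at position $x$, so $a(1)=x$.

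The step that will need the most care is the junction between the auxiliary first block and the generic blocks: I must check that the trailing zeros of $\phi(b)$ match those of $\phi(0)$ and $\phi(1)$, so that the gap $a(2)-a(1)$ obeys the same rule as every later gap. Both morphisms are arranged so that each image ends in $0^{y-1}$ (respectively $0^{z-1}$), and this is exactly what makes the boundary case $n=2$ agree with the recurrence, whether $x=2$ (so $w_2=1$ and $a(2)=a(1)+y$) or $x>2$ (so $w_2=0$ and $a(2)=a(1)+z$). Once this boundary is verified, the self-referential identification of $w_n=1$ with $n\in\{a(k)\}$ propagates to the whole sequence just as in Lemma~\ref{lem:01yz}, so the coded fixed point is the characteristic sequence of the $(0,x,y,z)$-hiccup sequence, which is therefore morphic.
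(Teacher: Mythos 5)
Your proposal is correct and is essentially identical to the paper's proof: you adjoin the same auxiliary letter $b$ with the very same images $b\mapsto b0^{x-2}10^{y-1}$ (for $y<z$) and $b\mapsto b0^{x-2}10^{z-1}$ (for $y>z$), the same coding $b\mapsto 0$, and the same one-$1$-per-block bookkeeping inherited from Lemma~\ref{lem:01yz}. The only difference is that you spell out the boundary verification at $n=2$ and the fact that $a(k)>k$ makes the witnessing index automatic, details the paper leaves implicit.
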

\begin{proof}
    We add a letter $b$ to the alphabet and consider the fixed point that
    starts with~$b$.
    If $y<z$ we define the morphism  
    \begin{equation}\label{eq:6}
    b\mapsto b0^{x-2}10^{y-1},\ 0\mapsto 0^{z-y}10^{y-1},\ 1\mapsto 10^{y-1}.
    \end{equation}
    Note that $b$ can be removed from this morphisms if $x=z-y+1$,
    when the fixed point that starts with $0$ produces the hiccup sequence. If $y>z$ we define
    \begin{equation}\label{eq:7}
    b\mapsto b0^{x-2}10^{z-1},\ 0\mapsto 10^{z-1},\ 1\mapsto 0^{y-z}10^{z-1}.
    \end{equation}
    Now code $b\mapsto 0$. 
\end{proof}

Note that if we remove $b$ from the morphism in Equation~\ref{eq:6}, this gives the morphism
in Equation~\ref{eq:4}. Similarly, if we replace $b\mapsto b0^{x-2}10^{z-1}$ by $b\mapsto b0^{z-1}$
in Equation~\ref{eq:7},
i.e., remove the factor $0^{x-2}1$ from the image of $b$, then we get Equation~\ref{eq:5}. 
With these morphisms for $j=0$ in hand, we can find sequences 
in the OEIS that have not yet been recognized as hiccup sequences. For instance, if $x=2, y=2, z=3$
then we get \seqnum{A026356}. This sequence is equal to \seqnum{A007066}, except for the first entry,
which is the hiccup sequence with $x=1, y=2, z=3$. 
The morphism
\begin{equation}\label{eq:010}
0\mapsto 010,\ 1\mapsto 10
\end{equation}
generates both of these sequences. 
\seqnum{A026356} is the fixed point starting with $0$, and \seqnum{A007066} is the one starting with $1$,
see~\cite[Exercise 6.1.25]{fogg}. That explains why the two sequences are the same, except for their initial entries.

The morphisms in these lemmas are not unique, there are other morphisms that code the same sequence. 
For instance, our morphism for the $(0,2,4,2)$-hiccup sequence in Equation~\ref{eq:7}
is given by $b\mapsto b10,\ 0\mapsto 10,$ $1\mapsto 0010$. This morphism can be simplified to
 $0\mapsto 01,$ $1\mapsto 0001$, which by Slater's construction produces the same fixed point 
 without using the special letter $b$. The $(0,2,4,2)$-hiccup sequence is entry \seqnum{A284753} in the OEIS. It is not yet listed as a hiccup sequence. 
 
 There is a degree of freedom in the morphism, by a cyclic permutation of the
 image words of $0$ and $1$. If
 we replace the morphism in Equation~\ref{eq:6} by $b\mapsto b0^{x-2}10^{y-2},\ 0\mapsto 0^{z-y+1}10^{y-2},\ 1\mapsto 010^{y-2},$ assuming that $y\geq 2$, then we get the same fixed point.
 Similarly, we can replace the morphism in Equation~\ref{eq:7} by
 $b\mapsto b0^{x-2}10^{z-2},\ 0\mapsto 010^{z-2},\ 1\mapsto 0^{y-z+1}10^{z-2},$
 if $z\geq 2$. This is exactly how we got that morphism $0\mapsto 01,\ 1\mapsto 0001$ above.
 If the parameters allow it, it is possible to get rid of the special letter $b$ by
 iterating the cyclic permutation. A sequence is \emph{purely morphic} if
 there is no need for the coding $\pi$ in Definition~\ref{def:1}.

\begin{lemma}\label{lem:pure}
    A $(0,x,y,z)$-hiccup sequence is purely morphic if $z-y+1\leq x\leq z$.
\end{lemma}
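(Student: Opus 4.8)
The plan is to exhibit an explicit morphism on the two-letter alphabet $\{0,1\}$ whose fixed point is the characteristic sequence itself, so that the coding $\pi$ of Definition~\ref{def:1} can be taken to be the identity. Starting from the morphisms of Lemma~\ref{lem:oxyz}, I would apply the cyclic permutation of the image words discussed above, sliding the single $1$ of each block to the right and absorbing trailing zeros, until the auxiliary letter $b$ is no longer needed. The candidate I expect to reach is
\[
\sigma(0)=0^{x-1}10^{z-x},\qquad \sigma(1)=0^{x+y-z-1}10^{z-x},
\]
with the fixed point taken to begin with $0$. Both image words contain exactly one $1$ and share the same number $z-x$ of trailing zeros; this common tail is the feature the whole argument exploits.

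The first step is to record when $\sigma$ is a legitimate word morphism: the exponents $x-1$, $z-x$, and $x+y-z-1$ must all be non-negative. The conditions $z-x\ge 0$ and $x+y-z-1\ge 0$ are precisely $x\le z$ and $x\ge z-y+1$, which is the source of the stated hypothesis. In the language of the cyclic permutation these are exactly the inequalities guaranteeing that the required shifts are legal: sliding the $1$ rightwards consumes the trailing zeros of the shorter image word, the bound $x\le z$ says that enough are available, and $x\ge z-y+1$ says that the number of shifts is non-negative.

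The second step is to check that $w=\sigma^\infty(0)$ carries its $1$'s at the positions of the hiccup sequence, for which I would reuse the Slater-type counting argument from the proof of Lemma~\ref{lem:01yz}. Because the tail $0^{z-x}$ is common to $\sigma(0)$ and $\sigma(1)$, the distance from the $(n-1)$-st $1$ to the $n$-th $1$ is $z-x$ trailing zeros, then the $1$, then the leading zeros of $\sigma(w_n)$, which are $x-1$ if $w_n=0$ and $x+y-z-1$ if $w_n=1$. This collapses to a gap of $z$ exactly when $w_n=0$ and $y$ exactly when $w_n=1$, that is when $n\notin A$ and $n\in A$ respectively, matching the defining recursion; and since $\sigma(0)=0^{x-1}1\cdots$, the first $1$ sits at position $x$, so $a(1)=x$.

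The step I expect to be the genuine obstacle is guaranteeing that a fixed point with the correct starting letter exists, i.e.\ that $\sigma(0)$ begins with $0$, which needs $x\ge 2$. This is automatic when $y<z$, since there $z-y+1\ge 2$ forces $x\ge 2$, so the construction settles that regime outright. When $y>z$ the hypothesis also permits $x=1$, and then $\sigma(0)=10^{z-1}$ starts with $1$; passing instead to a fixed point beginning with $1$ would require $y-1$ shared trailing zeros and hence the impossible word $\sigma(0)=0^{z-y}10^{y-1}$. Resolving this endpoint — by deciding whether $b$ can still be removed there, or by treating $x=1,\ y>z$ separately — is the delicate part, and it is exactly the question of how far the cyclic permutation can be iterated for the given parameters. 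The remainder of the range is settled by the morphism above.
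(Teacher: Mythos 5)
Your construction is precisely the paper's proof: the published argument consists of a single sentence exhibiting the cyclically permuted morphism $0\mapsto 0^{x-1}10^{z-x},\ 1\mapsto 0^{x+y-z-1}10^{z-x}$ (Equation~\ref{eq:6b}), with no further verification. Your two middle steps --- that the exponents are non-negative exactly when $z-y+1\leq x\leq z$, and the Slater-type count showing that the common tail $0^{z-x}$ yields gaps $z$ or $y$ according to $w_n$, with the first $1$ at position $x$ --- are exactly what the paper leaves implicit, so for $x\geq 2$ you have the same proof, carried out more carefully.

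The endpoint you refuse to wave through is a genuine gap, and it is a gap in the paper's own proof as well, not only in yours. When $x=1$ the hypothesis forces $y>z$, and then $\sigma(0)=10^{z-1}$ begins with $1$ while $\sigma(1)=0^{y-z}10^{z-1}$ begins with $0$, so the morphism of Equation~\ref{eq:6b} has no fixed point on either letter; your remark that a fixed point beginning with $1$ would require the impossible image $\sigma(0)=0^{z-y}10^{y-1}$ is correct, and iterating the cyclic permutation cannot rescue it: one further shift yields Equation~\ref{eq:6b} with $x=2$, whose fixed point is the $(0,2,y,z)$-sequence, and the full cycle $0\mapsto 10^{z-1},\ 1\mapsto 10^{y-1}$ yields the $j=1$ rule of Lemma~\ref{lem:1xyz1}, in neither case the $(0,1,y,z)$-sequence. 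Contrast this with the regime $y<z$, where $x=1$ lies outside the hypothesis and is handled in the paper via Equation~\ref{eq:4} by starting the fixed point at the letter $1$ --- precisely the escape that is unavailable here. The paper routes the case $x=1$, $y>z$ through this very lemma in the proof of Theorem~\ref{thm:main2}, so the defect is inherited there and concerns actual table entries such as \seqnum{A064437} and \seqnum{A080903}. Indeed, a check against Yasutomi's characterization of substitution-invariant Sturmian words (the algebraic conjugate of the intercept of the characteristic word of \seqnum{A064437} appears to violate the required inequalities) suggests the claim may genuinely fail at this endpoint, so your instinct to treat $x=1$, $y>z$ separately --- or to restrict the statement to $x\geq 2$ --- is not optional polish but a necessary repair.
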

\begin{proof}
    A cyclic permutation of the code words for $0$ and $1$ gives the morphism
    \begin{equation}\label{eq:6b}
    0\mapsto 0^{x-1}10^{z-x},\ 1\mapsto 0^{x+y-z-1}10^{z-x}.
    \end{equation}
\end{proof}

\begin{lemma}\label{lem:1xyz1}
    A $(1,x,y,z)$-hiccup sequence with $x\geq 1$
    is morphic. It is purely morphic if $x=1$ and $y>1$.
\end{lemma}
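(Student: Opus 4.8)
The plan is to reuse the block construction of Lemmas~\ref{lem:01yz} and~\ref{lem:oxyz}, changing only where the variable block of zeros is placed. For $j=0$ the gap between the $(n-1)$-st and $n$-th one was governed by the letter $w_n$, because the image words carried their variable number of zeros \emph{before} the one. Since the $j=1$ rule consults $n-1\in A$ instead of $n\in A$, I want the gap $a(n)-a(n-1)$ to be governed by the \emph{previous} letter $w_{n-1}$, so I place the variable zeros \emph{after} the one. Every image word then begins with a single $1$, and its length records the gap to the next one:
\[
\psi(1)=10^{y-1},\qquad \psi(0)=10^{z-1},
\]
so $|\psi(1)|=y$ and $|\psi(0)|=z$. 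One pair of rules serves both $y<z$ and $y>z$, so unlike the earlier lemmas no case split on the sign of $y-z$ is needed.

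First I would settle the purely morphic case $x=1$, $y>1$. Then $\psi(1)$ starts with $1$ and has length $y\geq 2$, so $w=\psi^\infty(1)$ is a well-defined fixed point with its first one at position $1=x$. Writing $w=\psi(w_1)\psi(w_2)\cdots$, each block carries exactly one $1$, sitting at its start, so the $n$-th one heads the $n$-th block and $a(n)-a(n-1)=|\psi(w_{n-1})|$. Because $A$ is the set of positions of ones, $w_{n-1}=1$ is by definition the same as $n-1\in A$, in which case the length is $y$, and otherwise it is $z$. This is exactly the $(1,1,y,z)$-hiccup recursion, and no coding intervenes, so the sequence is purely morphic.

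Next I would handle $x\geq 2$ with a special letter $b$ that builds the initial segment, as in Lemma~\ref{lem:oxyz}:
\[
b\mapsto b\,0^{x-2}10^{z-1},\qquad 0\mapsto 10^{z-1},\qquad 1\mapsto 10^{y-1},
\]
coded by $\pi(b)=0$. Here $b$ is the only letter whose image begins with $b$, so it occupies position $1$ alone; after coding, the first one lands at position $x$, giving $a(1)=x$, and since $x>1$ forces $1\notin A$, the trailing $0^{z-1}$ of $\psi(b)$ makes $a(2)-a(1)=z$ as required. For $n\geq 3$ the block bookkeeping of the previous paragraph applies verbatim and returns the recursion. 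This argument also covers $y=1$, where $\psi(1)=1$ does no harm because the fixed point still grows through $\psi(b)$ and $\psi(0)$; the sole remaining case $x=1,y=1$ is degenerate, since the recursion forces $a(n)=n$ and $A=\mathbb{N}$, whose characteristic sequence $1^\infty$ is trivially morphic.

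The step I expect to be the main obstacle is confirming the self-consistency of the bookkeeping: that the letter $w_{n-1}$ controlling the length of block $n-1$ is genuinely the indicator of $n-1\in A$. This is the same tautology that drives the earlier lemmas---$A$ consists of the positions of ones, so $w_p=1\Leftrightarrow p\in A$---but one must check that the special letter does not spoil it. As $b$ sits only at position $1$ and is coded to $0$, the equivalence $w_p=1\Leftrightarrow p\in A$ survives for every $p\geq 2$, which is all the recursion uses once the case $n=2$ has been verified by hand. A minor secondary point is that each fixed point is genuinely infinite in the boundary cases $x=2$ and $y=1$; this holds because $\psi$ is non-erasing and $|\psi(b)|\geq 2$, so the iterates $\psi^k(b)$ strictly increase in length.
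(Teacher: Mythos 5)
Your proof follows essentially the same route as the paper's: for $x>1$ adjoin a special letter $b$ that builds the initial segment and is coded to $0$, use $0\mapsto 10^{z-1}$, $1\mapsto 10^{y-1}$ so that each block starts with its unique $1$ and the gap $a(n)-a(n-1)$ is the length of the block of $w_{n-1}$, drop $b$ when $x=1$, $y>1$, and dispose of the degenerate case $x=y=1$ by noting $a(n)=n$. There is, however, one discrepancy worth flagging, and it is in your favor: the paper's morphism for $x>1$ sends $b\mapsto b0^{x-2}10^{y-1}$, whereas you send $b\mapsto b0^{x-2}10^{z-1}$. Your version is the correct one. Since $x>1$ forces $1\notin A$, the recursion demands $a(2)-a(1)=z$, and the gap from the $1$ inside $\psi(b)$ to the leading $1$ of the next block is one more than the number of trailing zeros of $\psi(b)$; the paper's tail $0^{y-1}$ therefore yields $a(2)-a(1)=y$, contradicting both the recursion and the paper's own concluding sentence (``otherwise by $z-1$ zeros,'' which applies at $n=2$ because $w_1=b\neq 1$). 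Concretely, for the $(1,2,3,2)$-hiccup sequence $2,4,7,9,12,\ldots$ the paper's morphism $b\mapsto b100$, $0\mapsto 10$, $1\mapsto 100$ generates $2,5,8,10,\ldots$, while your $b\mapsto b10$, $0\mapsto 10$, $1\mapsto 100$ generates the correct sequence. So you have reproduced the paper's argument and silently repaired a typo in it; your extra care about the cases $y=1$ (with $x\geq 2$) and the prolongability of $\psi$ on $b$ is also sound and consistent with the paper's treatment.
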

\begin{proof}
    First suppose $x>1$.
    We add the letter $b$ and we define
    \[b\mapsto b0^{x-2}10^{z-1},\ 0\mapsto 10^{z-1},\ 1\mapsto 10^{y-1}.\]
    Let $(w_i)$ be the unique fixed point that starts with $b$, which
    is coded to $0$. The $n$-th $1$ in the sequence is in $\sigma(w_n)$.
    It is preceded by $y-1$ zeros if $w_{n-1}=1$, i.e., if $n-1$ is
    in the sequence, and otherwise by $z-1$ zeros.

    If $x=1$ and $y=1$,
    then we get the trivial sequence $a(n)=n$.
    If $x=1$ and $y>1$ then there is no need for the letter $b$, and we can
    take $ 0\mapsto 10^{z-1},\ 1\mapsto 10^{y-1}$.
\end{proof}

Some of these morphisms correspond to entries in the OEIS that have not been recognized
as hiccup sequences.
For instance, the $(1,1,2,1)$-hiccup sequence is the fixed point of the Fibonacci morphism 
$0\mapsto 1,\ 1\mapsto 10$, which gives the lower Wythoff sequence~\seqnum{A000201}. The
$(1,1,3,1)$-hiccup sequence is equal to \seqnum{A003156}, and the $(1,1,2,3)$-hiccup sequence
is \seqnum{A026352}. The result is not sharp. There are more choices of
parameters for which hiccup sequences are purely morphic.
For instance, the morphism $0\mapsto 010,\ 1\mapsto 01$, which is the reversal of the morphism in
\ref{eq:010}, generates
the $(1,2,2,3)$-hiccup sequence by Slater's argument. It is the upper Wythoff sequence \seqnum{A001950}.

\begin{lemma}\label{lem:00yz}
    Let $b(n)$ be the $(0,z-1,y,z)$-hiccup sequence for $y,z>1$, and let 
    \[a(n+1)=b(n)+1,\ a(1)=0.\]
    Then $a(n)$ is the $(0,0,y,z)$-hiccup sequence.
    In particular, $a(n+1)$ is morphic.
\end{lemma}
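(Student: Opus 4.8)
The plan is to verify directly that the sequence $a$ built from $b$ obeys the recurrence of the $(0,0,y,z)$-hiccup sequence, and then to invoke uniqueness. A $(j,x,y,z)$-hiccup sequence is completely determined by its initial value together with its recurrence, because $a(n)$ depends only on $a(n-1)$ and on whether $n-j$ lies among the strictly earlier terms $a(1),\dots,a(n-1)$; a strong induction pins every term down. Hence any sequence satisfying the $(0,0,y,z)$-recurrence with $a(1)=0$ must equal the $(0,0,y,z)$-hiccup sequence, and it suffices to check that recurrence for the constructed $a$.

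First I would record the earlier terms. From $a(1)=0$ and $a(k)=b(k-1)+1$ for $k\ge 2$ one gets, for $n\ge 2$,
\[
\{a(k)\}_{k<n}=\{0\}\cup\{b(j)+1 : 1\le j\le n-2\}.
\]
Since $n\ge 2$ is positive it cannot equal the listed value $0$, so
\[
n\in\{a(k)\}_{k<n}\iff n-1=b(j)\ \text{for some}\ 1\le j\le n-2\iff n-1\in\{b(k)\}_{k<n-1}.
\]
This membership equivalence is the heart of the argument: the test that governs the step from $a(n-1)$ to $a(n)$ in the $(0,0,y,z)$-rule (is $n$ already listed?) coincides exactly with the test that governs the step from $b(n-2)$ to $b(n-1)$ in the $(0,z-1,y,z)$-rule (is $n-1$ already listed?).

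With this in hand the increments match for $n\ge 3$: there $a(n)-a(n-1)=b(n-1)-b(n-2)$, and the $b$-recurrence at index $n-1$ returns $y$ when $n-1\in\{b(k)\}_{k<n-1}$ and $z$ otherwise, which by the equivalence is $y$ when $n\in\{a(k)\}_{k<n}$ and $z$ otherwise, exactly as the $(0,0,y,z)$-rule demands. The base case $n=2$ is where the offset $x=z-1$ is used: since $2\notin\{0\}$ the rule prescribes increment $z$, and indeed $a(2)-a(1)=(b(1)+1)-0=z$ precisely because $b(1)=z-1$. This finishes the identification of $a$ as the $(0,0,y,z)$-hiccup sequence.

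For the final assertion, $b$ is the $(0,z-1,y,z)$-hiccup sequence with $z-1\ge 1$, hence morphic: by Lemma~\ref{lem:oxyz} when $z\ge 3$ (so that $z-1>1$) and by Lemma~\ref{lem:01yz} when $z=2$. Since $a(n+1)=b(n)+1$ is obtained from $b$ by adding the constant $1$, its characteristic sequence is that of $b$ shifted by one position (equivalently, $b$'s characteristic sequence with a single $0$ prepended), and this operation preserves morphicity; therefore $a(n+1)$ is morphic. I expect the only real obstacle to be the index bookkeeping in the membership equivalence, together with the separate treatment of the degenerate base case $n=2$, where the set of earlier terms is just $\{0\}$ and the empty index range $1\le j\le 0$ must be read correctly.
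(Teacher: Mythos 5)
Your proof is correct and takes essentially the same route as the paper: a direct check that $a$ satisfies the $(0,0,y,z)$-recurrence (with the base case $a(2)=z$ coming from $b(1)=z-1$, and the membership translation $n\in\{a(k)\}_{k<n}\iff n-1\in\{b(k)\}_{k<n-1}$), followed by uniqueness of the sequence determined by that recurrence. The only difference is cosmetic: where you invoke the general fact that prepending a symbol to a morphic sequence preserves morphicity, the paper makes this explicit by adjoining a letter $c$ with $c\mapsto cd$, where $d$ is the initial letter of the fixed point for $b$, and coding $c\mapsto 0$.
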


For instance, \seqnum{A081841} is equal to \seqnum{A064437} plus one.

\begin{proof}
    Since $y,z>1$ we have that $b(n+1)\geq b(n)+2$ for all $n$ and $b(1)=z-1\geq 1$.
    It follows that $b(n)>n$ for $n>1$.
    By definition, $a(2)=b(1)+1=z$, which is the correct value for the $(0,0,y,z)$-hiccup sequence.    
    For $n>1$ we have \[a(n+1)=b(n)+1=b(n-1)+y+1=a(n)+y\] if $n\in \{b(k)\}_{k<n}$, and otherwise 
    \[a(n+1)=b(n)+1=b(n-1)+z+1=a(n)+z.\] 
    Since $a(k)=b(k-1)+1$ and $a(1)=0$, it follows that $a(n+1)=a(n)+y$ if and only if $n+1\in\{a(k)\}_{k<n+1}$,
    and it is $a(n+1)=a(n)+z$ otherwise. 

    The sequence $b(n)$ corresponds to the positions of $1$ in a coded fixed point of a morphism.
    This morphism has an alphabet of two or three letters, depending on
    the parameters. To convert this to a morphism for $a(n+1)=b(n)+1$, add a letter $c$ and add
    the substitution $c\mapsto cd$ to the morphism, 
    where $d$ is the initial letter of the fixed point for $b(n)$.
    This is the morphism for $a(n+1)$.
    Add $c\mapsto 0$ to the coding.
\end{proof}

\begin{lemma}\label{lem:1xyz2}
    Let $a(n)$ is the $(j,x,y,z)$-hiccup sequence for $j>0$ and let
    \[b(n)=a(n)+j.\]
    Then $b(n)$ is the $(0,x+j,y,z)$-hiccup sequence. 
\end{lemma}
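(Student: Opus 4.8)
The plan is to show directly that the shifted sequence $b(n)=a(n)+j$ obeys the defining recurrence of the $(0,x+j,y,z)$-hiccup sequence. Since that recurrence is deterministic---the initial value together with the case-split rule pins down every subsequent term---it suffices to check that $b$ has the correct first entry and that it increments by $y$ or $z$ according to the prescribed condition. Uniqueness of the sequence satisfying a given hiccup recurrence then forces $b$ to coincide with the $(0,x+j,y,z)$-hiccup sequence.

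First I would record the easy facts. The initial condition is immediate: $b(1)=a(1)+j=x+j$, as required. Because $b$ is obtained from $a$ by a constant translation, the first differences are preserved, $b(n)-b(n-1)=a(n)-a(n-1)$ for all $n\geq 2$, and in particular $b$ is strictly increasing since $a$ is. So each step of $b$ is an increment of either $y$ or $z$, and the only thing left is to identify \emph{when} each of the two increments occurs.

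This is the heart of the argument, and it rests on a single observation: translating the set of values by $j$ exactly compensates for translating the index in the condition by $j$. Concretely, for an index $k<n$ we have $a(k)=n-j$ if and only if $a(k)+j=n$, i.e.\ if and only if $b(k)=n$. Hence $n-j\in\{a(k)\}_{k<n}$ if and only if $n\in\{b(k)\}_{k<n}$, the two membership statements ranging over exactly the same indices $k=1,\dots,n-1$. Substituting this equivalence into the defining rule for $a$ converts the condition ``$n-j\in\{a(k)\}_{k<n}$'' into ``$n\in\{b(k)\}_{k<n}$'', which is precisely the $(0,x+j,y,z)$-hiccup condition; so $b(n)=b(n-1)+y$ in that case and $b(n)=b(n-1)+z$ otherwise.

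The main obstacle---such as it is---is purely bookkeeping: one must verify that the index ranges in the two set-membership conditions coincide with no off-by-one error, so that the shift of the condition index and the shift of the values cancel cleanly. Once that is checked, the lemma follows from the determinism of the hiccup recurrence. As a byproduct, note that $j>0$ and $x\geq 1$ give $x+j>1$, so Lemma~\ref{lem:oxyz} applies and $b(n)$ is morphic.
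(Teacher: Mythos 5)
Your proposal is correct and follows essentially the same route as the paper's proof: the paper also verifies $b(n)=a(n-1)+y+j$ or $a(n-1)+z+j$ according to the condition, and then uses exactly your key equivalence that $n-j\in\{a(k)\}_{k<n}$ holds if and only if $n\in\{b(k)\}_{k<n}$. Your version merely spells out the initial condition, the preservation of first differences, and the uniqueness of the recurrence, which the paper leaves implicit.
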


There are two entries in Table~\ref{tbl:1} that have $j=1$.
Hiccup sequence \seqnum{A086377}, which was considered in \cite{bosma2018}, is equal to \seqnum{A080652} minus one. 
The other entry \seqnum{A086398} is equal to \seqnum{A284753} minus one.

\begin{proof}
    By definition, $b(n)=a(n)+j=a(n-1)+y+j$ if $n-j\in\{a(k)\}_{k<n}$ and it is $a(n-1)+z+j$ if not.
    Now $n-j\in\{a(k)\}_{k<n}$ is equal to $n\in\{b(k)\}_{k<n}$, which defines the $(j,z,y,z)$-hiccup sequence.
\end{proof}

\begin{theorem}\label{thm:main2}
    Hiccup sequences are morphic. A $(0,x,y,z)$-sequence is purely morphic
    if $z-y+1\leq x\leq z$ or if $x=1$.
\end{theorem}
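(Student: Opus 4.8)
The plan is to assemble the preceding lemmas into an exhaustive case analysis, organized first by the value of $j$ and then by the value of $x$. For the first assertion I would begin with $j=0$, where the three sub-cases $x=0$, $x=1$, and $x\geq 2$ are settled directly by Lemmas~\ref{lem:00yz}, \ref{lem:01yz}, and \ref{lem:oxyz} respectively. The only real task here is to verify that these three lemmas jointly exhaust every admissible parameter vector: for $x=0$ one needs $y,z>1$, and the remaining boundary combinations (most notably the degenerate $x=z=1$, which collapses to $a(n)=n$ with the all-ones characteristic sequence) are trivial and can be disposed of by hand.

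For $j\geq 1$ I would reduce to the case $j=0$ by means of Lemma~\ref{lem:1xyz2}: the shifted sequence $b(n)=a(n)+j$ is the $(0,x+j,y,z)$-hiccup sequence, and since $x+j\geq 1$ it is morphic by the $j=0$ analysis (via Lemma~\ref{lem:01yz} when $x+j=1$, and via Lemma~\ref{lem:oxyz} when $x+j\geq 2$). It then remains to pass from the morphicity of $b$ back to that of $a$. Writing $\alpha$ and $\beta$ for the characteristic sequences of $a$ and $b$, the identity $b(n)=a(n)+j$ says precisely that $\beta=0^{j}\alpha$, so $\alpha$ is obtained from $\beta$ by deleting $j$ leading symbols, i.e.\ $\alpha=T^{j}\beta$ for the shift $T$. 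I would invoke the standard closure of the class of morphic sequences under the shift, see~\cite[Ch. 7]{alloucheshallit}, to conclude that $\alpha$ is morphic. I expect this shift step to be the main obstacle: it is the one place where an abstract closure property replaces the explicit morphism constructions used everywhere else, and one should check that the morphisms produced by the earlier lemmas are non-erasing so that the closure applies cleanly, and that the indexing behaves (when $x=0$ one works with $a(n)+1$ as in Lemma~\ref{lem:00yz} to avoid a letter at position $0$). For $j=1$ this detour can be bypassed entirely, since Lemma~\ref{lem:1xyz1} already furnishes a direct morphism.

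For the purely morphic assertion, which concerns only $(0,x,y,z)$-sequences, I would treat the two stated conditions separately. The range $z-y+1\leq x\leq z$ is exactly Lemma~\ref{lem:pure}, whose two-letter morphism in Equation~\ref{eq:6b} has a fixed point that already is the characteristic sequence, so no coding is needed. For $x=1$ I would split on the sign of $z-y$, recalling that $y\neq z$. When $z<y$ one checks that $x=1$ satisfies $z-y+1\leq 1\leq z$, so this case already falls under Lemma~\ref{lem:pure}. When $z>y$ the value $x=1$ lies strictly below the range covered by Lemma~\ref{lem:pure}, so here I would instead read off the construction of Lemma~\ref{lem:01yz} directly: for $z>y$ that lemma realises the sequence as the two-letter fixed point of $\phi(1)=10^{y-1}$, $\phi(0)=0^{z-y}10^{y-1}$ with initial block $10$ and with no coding, whence the sequence is purely morphic. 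Combining the two sub-cases shows the $x=1$ clause for all admissible $y,z$, which together with Lemma~\ref{lem:pure} completes the proof.
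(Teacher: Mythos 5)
Your proposal is correct and follows essentially the same route as the paper: the purely morphic clause is argued exactly as in the paper's own proof (Lemma~\ref{lem:pure} for $z-y+1\leq x\leq z$, the observation that $x=1$ with $y>z$ already lies in that range, and the two-letter morphism of Equation~\ref{eq:4} for $x=1$ with $y<z$), while the general morphicity claim is the same assembly of Lemmas~\ref{lem:01yz}, \ref{lem:oxyz}, \ref{lem:00yz}, \ref{lem:1xyz1}, and \ref{lem:1xyz2} that the paper compresses into the single sentence ``this is a consequence of the preceding lemmas.'' Your one genuine addition is spelling out the shift-closure step needed to pass from the morphicity of $b(n)=a(n)+j$ back to that of $a(n)$ when $j\geq 2$ (together with the non-erasing check), a detail the paper leaves implicit in Lemma~\ref{lem:1xyz2}; this is a tightening of the same argument rather than a different method.
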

\begin{proof}
    This is a consequence of the preceding lemmas. The sequence is purely
    morphic if $z-y+1\leq x\leq z$ according to Lemma~\ref{lem:pure}. If
    $x=1$ and $y>z$, then $z-y+1\leq x\leq z$. If $x=1$ and $y<z$, then
    the sequence is purely morphic according to Equation~\ref{eq:4}.
\end{proof}

This extends the second characterization in the BDS theorem. We conclude with some remarks
on the morphisms that we encountered in this section. They are defined on the alphabet 
$\{b,0,1\}$ and the letter $b$ only occurs as the initial letter of the fixed point.
The \emph{adjacency matrix} of a morphism registers the number of letters in the
substitution words. For instance, the morphism $0\mapsto 0100,\ 1\mapsto 100$ for
the $(0,1,3,4)$-hiccup sequence has
adjacency matrix $\begin{bmatrix}
3 & 2 \\
1 & 1
\end{bmatrix}
$.
For the morphisms that we found in this section, the restriction to $\{0,1\}$ 
has adjacency matrix equal to 
\begin{equation}\label{eq:adjacency}
\begin{bmatrix}
z-1 & y-1 \\
1 & 1
\end{bmatrix}.
\end{equation}
If an iterate of the adjacency matrix has all entries $>0$,
then the morphism is \emph{primitive}. 
In particular, our morphisms are \emph{primitive} if $y>1$,
when restricted to the alphabet $\{0,1\}$. 
If $y=1$, then the morphism is not primitive.

\section{Beatty sequences and beyond}

By the results of the previous section, in particular Lemma~\ref{lem:1xyz2}, we may
restrict our attention to $(0,x,y,z)$-hiccup sequences with $x,y,z$ in $\mathbb Z_{\geq 1}$.
In fact, we will restrict our attention even further and consider only 
hiccup sequences that are purely morphic.
\medbreak
The semigroup of \emph{Sturmian morphisms}, see \cite[p. 72] {Berstel}, is generated by 
the three substitutions \(L\), \(R\), and \(E\):
\hspace{-0.2cm}
\[
E:
\begin{cases}
0 \mapsto 1 \\
1 \mapsto 0
\end{cases}
\quad
L: 
\begin{cases}
0 \mapsto 01 \\
1 \mapsto 0
\end{cases}
\quad
R:
\begin{cases}
0 \mapsto 10 \\
1 \mapsto 0
\end{cases}
\]
\noindent
For instance, the square
$R^2$ produces the morphism $0\mapsto 010,\ 1\mapsto 10$ in Equation~\ref{eq:010} 
which has two fixed points. One corresponds to \seqnum{A026356}
and the other corresponds to \seqnum{A007066}. The square $L^2$ produces the
reversal of $R^2$ given by $0\mapsto 010,\ 1\mapsto 01$. Its fixed point is the $(1,2,2,3)$-hiccup
sequence, which is better known as the upper Wythoff sequence \seqnum{A001950}.

It follows from Theorem~\ref{thm:main2}, a $(0,x,y,z)$-hiccup sequence is purely
morphic if $x\leq z$ and $|z-y|=1$. We prove that the morphisms that generate
these sequences are Sturmian.

\begin{lemma}
    A $(0,x,y,z)$-hiccup sequence is Sturmian if $x\leq z$ and $|z-y|=1$.
\end{lemma}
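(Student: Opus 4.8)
The plan is to show that the morphism generating the sequence lies in the Sturmian monoid $\langle E,L,R\rangle$ described in \cite{Berstel}; such morphisms are exactly the Sturmian ones. First I would pin down that morphism. By Theorem~\ref{thm:main2}, the hypotheses $x\le z$ and $|z-y|=1$ force $z-y+1\le x\le z$ or $x=1$, so the sequence is purely morphic and is generated by the morphism $\phi$ of Equation~\ref{eq:6b} (with Equation~\ref{eq:4} covering the boundary case $x=1,\,y=z-1$). Its incidence matrix is the one in Equation~\ref{eq:adjacency}, with $\det=(z-1)-(y-1)=z-y=\pm1$; since $E$, $L$, and $R$ all have determinant $\pm1$, this is the expected necessary condition and a sanity check that we are in the Sturmian regime.

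The main tool is a desubstitution (continued-fraction) induction on the length $\ell(\phi)=|\phi(0)|+|\phi(1)|=y+z$. The structural input is that, in either case $|z-y|=1$, the two image words differ by a single boundary $0$: writing $\phi(0)=0^{x-1}10^{z-x}$, in the case $y=z-1$ one has $\phi(0)=0\cdot\phi(1)$, so $\phi(1)$ is a suffix of $\phi(0)$, while in the case $y=z+1$ one has $\phi(1)=0\cdot\phi(0)$, so $\phi(0)$ is a suffix of $\phi(1)$. Whenever the shorter image is a suffix of the longer one I can factor $\phi=\psi\circ R$ (taking the shorter image as $\psi(0)$ and the complementary prefix as $\psi(1)$, so that $R\colon 0\mapsto 10,\ 1\mapsto 0$ reproduces $\phi$); when it is a prefix I factor through $L$ instead, and an extra $E$ on the right lets me swap the roles of the two images so that the short one always plays the role of $\phi(0)$. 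In each step $\psi$ is non-erasing, has $|\det M_\psi|=1$ (determinants multiply, and $\det M_\phi=\pm1$), and has strictly smaller length, so the recursion is well founded.

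Finally I would check that the recursion stays inside a controlled family and terminates. After the first reduction $\phi$ leaves the class where both images carry a single $1$ and enters the shapes $0\mapsto 0^a10^b,\ 1\mapsto 0$ and $0\mapsto 0,\ 1\mapsto 0^c10^d$ (and their $E$-images); for these the single letter $0$ is trivially a prefix or a suffix of the other image, so a generator can always be peeled off until we reach one of $\mathrm{id},E,L,R$, all of which are Sturmian. This shows $\phi\in\langle E,L,R\rangle$. I expect the only real difficulty to be the bookkeeping: verifying that the prefix/suffix dichotomy is preserved under every reduction and matching each configuration to the correct generator, including the places where an $E$ must be inserted because the short and long images exchange roles. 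As a cross-check one can instead verify the classical criterion directly, since a short computation gives that $\phi(0)\phi(1)$ and $\phi(1)\phi(0)$ agree except at two adjacent positions where a $10$ is swapped for a $01$, which is the signature of a Sturmian morphism. When $y>1$ the morphism is primitive (as noted after Theorem~\ref{thm:main2}), so its fixed point is a genuine Sturmian word; the excluded values $y=1$ and $z=1$ give non-primitive morphisms whose fixed points are only eventually periodic, in line with the earlier remark that these parameters are special.
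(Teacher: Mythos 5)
Your proposal is correct in substance and would yield a complete proof, but it takes a genuinely different route from the paper. The paper proves the lemma by exhibiting an explicit closed-form factorization: setting $G=L\circ E$, $\tilde G=R\circ E$, and $H=E\circ\tilde G$, the morphism of Equation~\ref{eq:6b} equals $G^{x-1}\circ\tilde G^{z-x}\circ H$ when $y=z+1$, and $G^{x-2}\circ\tilde G^{z-x}\circ H\circ E$ when $y=z-1$ with $x\geq 2$; the boundary case $x=1$, $y=z-1$ is handled by noting that Equation~\ref{eq:4} gives the same morphism as the $(0,2,z-1,z)$ case started from the other letter --- exactly the case you also route through Equation~\ref{eq:4}. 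You instead run a desubstitution induction: your key structural observation, that $\phi(0)=0\cdot\phi(1)$ when $y=z-1$ and $\phi(1)=0\cdot\phi(0)$ when $y=z+1$, is correct, the length $|\phi(0)|+|\phi(1)|$ strictly decreases at each peel, and your intermediate shapes are genuinely Sturmian (indeed $0\mapsto 0,\ 1\mapsto 0^c10^d$ is exactly $G^c\circ\tilde G^d$), so executing the recursion reconstructs the paper's factorization generator by generator. Your approach buys an algorithm that applies to any unimodular binary morphism with the prefix/suffix property, at the cost of the bookkeeping you honestly flag; the paper's buys a one-line verification. Two small corrections: in the $R$-peel $\phi=\psi\circ R$ forces $\psi(0)=\phi(1)$, so it is $\phi(1)$, not $\phi(0)$, that must play the short-suffix role; and your closing remark is off, since $y=1$ is not excluded by the hypotheses ($y=1$, $z=2$ is admissible), while for the truly excluded $y=1$ sequences such as $(0,1,1,3)$ the fixed points are \emph{not} eventually periodic (the paper shows $a(k)-2k$ is unbounded there) --- they are merely non-primitive and non-automatic. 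Your cross-check via $\phi(01)$ and $\phi(10)$ differing by a single adjacent exchange of $10$ and $01$ is a legitimate alternative --- it is the known characterization of invertible, equivalently Sturmian, binary morphisms --- but it would need a citation rather than standing as folklore.
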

\begin{proof}
    First suppose that $y=z+1$.
    The morphism $0\mapsto 0^{x-1}10^{z-x},\ 1\mapsto 0^{x}10^{z-x}$ 
    generates the $(0,x,z+1,z)$-hiccup sequence according to
    Equation~\ref{eq:6b}. We prove that the morphism is Sturmian. 
    The morphisms
    \[
  G\colon  \begin{cases}
0 \mapsto 0 \\
1 \mapsto 01
\end{cases}
\text{ }
\tilde  G\colon  \begin{cases}
0 \mapsto 0 \\
1 \mapsto 10
\end{cases}
\text{ }
H\colon
\begin{cases}
0 \mapsto 1 \\
1 \mapsto 01
\end{cases}
    \]
    are Sturmian. They are $G=L\circ E$, $\tilde G=R\circ E$,
    and $H=E\circ \tilde G$.
    Now $G^{x-1}\circ \tilde G^{z-x}\circ H$ produces the required morphism.    

    Now suppose $y=z-1$. If $x\geq 2$, then 
    $0\mapsto 0^{x-1}10^{z-x},\ 1\mapsto 0^{x-2}10^{z-x}$ 
    generates the $(0,x,z-1,z)$-hiccup sequence, according to
    Equation~\ref{eq:6b}. It is equal to
    $G^{x-2}\circ \tilde G^{z-x}\circ H\circ E$.
    Recall that we require $z>1$ if $x=1$. If $x=1$, then
    $0\mapsto 010^{z-2},\ 1\mapsto 10^{z-2}$ generates the
    $(0,1,z-1,z)$-hiccup sequence, starting from $1$, according
    to Equation~\ref{eq:4}. 
    This is the same morphism used for the
    $(0,2,z-1,z)$-hiccup sequence, but starting from
    the other letter.
\end{proof}

There is a well-known correspondence between Sturmian substitutions and the sequences
\[
S_{\alpha,\beta}(n)=\lfloor \alpha (n+1) +\beta\rfloor - \lfloor \alpha n+\beta\rfloor,
\]
which are known as \emph{Sturmian words} or \emph{mechanical words}, see \cite[p53]{Berstel}.
The values of $\alpha$ and $\beta$ can be determined from the following transformations
\[
T_E(x,y)=\left(1-x,1-y\right),\ 
T_L(x,y)=\left(\frac{1-x}{1-2x},\frac{1-y}{1-2x} \right),\ 
T_R(x,y)=\left(\frac{1-x}{1-2x},\frac{2-x-y}{1-2x} \right).
\]
A composition of $E,L,R$ corresponds to a composition of these
transformations, and $(\alpha,\beta)$ is a fixed point of the resulting composite transformation, 
see~\cite[p73]{Berstel} and~\cite{bosma2018}.
As an example, we compute $\alpha$
and $\beta$ for the fixed point of the morphism $0\mapsto 010,\ 1\mapsto 10$.
The morphism has adjacency matrix
\[
M = \begin{bmatrix}
2 & 1 \\
1 & 1
\end{bmatrix}
\]
which has largest eigenvalue $\lambda=\frac{3+\sqrt 5}2$. 
Now $\alpha$ is equal to the density   
of $1's$ in the fixed point of the morphism,
and a minor calculation gives $\alpha=\frac{3-\sqrt{5}}2$.
The fixed point $(\alpha,\beta)$ for $T_{R^2}=T_RT_R$ satisfies $\beta=1-\alpha$.
This is one of the special cases in which rounding up or down makes a difference
for the mechanical word. The
rounded-up sequence
corresponds to the fixed point beginning with $0$, while the rounded-down one
begins with $1$.

Mechanical words can be converted to Beatty sequences~\cite{beatty}.
The following result and its proof is almost identical to~\cite[Lemma 1]{bosma2018}.
\begin{lemma} Let $\alpha > 1$ be irrational, and let $(s_n)_{n \geq 1}$ be given by
the rounded-up mechanical sequence
\[ s_n=\lceil (n+1)\alpha  +\beta\rceil - \lceil n\alpha +\beta\rceil, \]
for some real number $\beta$ with $\alpha + \beta \geq 1$.
Then the rounded-down Beatty sequence
\[
\left\lfloor \frac k\alpha -\frac \beta \alpha \right\rfloor\ \text{ for }k\in \mathbb N
\]
corresponds to the sequence of positions of 1 in $(s_n)$. Similarly, a rounded-down
mechanical sequence corresponds to a rounded-up Beatty sequence.
\end{lemma}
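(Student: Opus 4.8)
The plan is to recognize the two objects in the statement as mutually inverse and to reduce the whole claim to a single order-adjunction between the ceiling function and the Beatty inversion $t \mapsto \lfloor (t-\beta)/\alpha\rfloor$. First I would make explicit what ``positions of $1$'' means here. Since $s_n = \lceil (n+1)\alpha+\beta\rceil - \lceil n\alpha + \beta\rceil$ is a telescoping difference, its partial sums recover $\lceil n\alpha+\beta\rceil$, so the increasing sequence whose consecutive gaps are $(s_n)$ has its terms — the positions of the $1$'s in the associated binary word — at $a(n) = \lceil n\alpha+\beta\rceil$. The hypothesis $\alpha+\beta\geq 1$ is exactly what guarantees $a(1)=\lceil\alpha+\beta\rceil\geq 1$, so the word starts at a positive index and no $1$ is lost at the boundary.

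The heart of the argument is the elementary equivalence, valid for all integers $k$ and $n\geq 1$,
\[
a(n)=\lceil n\alpha+\beta\rceil \le k \iff n\alpha+\beta\le k \iff n \le \frac{k-\beta}{\alpha} \iff n\le \left\lfloor \frac{k-\beta}{\alpha}\right\rfloor,
\]
where the first step uses $\lceil x\rceil \le k \iff x\le k$ for integer $k$, and the last uses $n\le t \iff n\le \lfloor t\rfloor$ for integer $n$. Counting the admissible $n$ then gives the counting function of the positions of $1$,
\[
N(k):=\#\{\,n\ge 1 : a(n)\le k\,\}=\left\lfloor \frac{k}{\alpha}-\frac{\beta}{\alpha}\right\rfloor,
\]
which is precisely the rounded-down Beatty sequence in the statement. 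Because $\alpha>1$, the slope $1/\alpha$ is less than $1$, so $N(k)-N(k-1)\in\{0,1\}$; as $a$ is strictly increasing, this increment equals $1$ exactly when $k=a(n)$ for some $n$. Thus $N$ is the generalized inverse of $a$, and the rounded-down Beatty sequence and the sequence of positions of $1$ in $(s_n)$ determine one another, which is the asserted correspondence.

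For the ``similarly'' clause I would run the same computation with the two roundings exchanged: the rounded-down mechanical word places its $1$'s at $a'(n)=\lfloor n\alpha+\beta\rfloor$, and the relevant adjunction becomes $\lfloor x\rfloor \le k \iff x<k+1$, so the inequality $n\alpha+\beta<k+1$ converts into a rounded-up Beatty sequence $\lceil k/\alpha-\beta/\alpha\rceil$. The conversion $n<t\iff n\le\lceil t\rceil-1$ is clean provided $t=(k+1-\beta)/\alpha$ is never an integer, and this is where the irrationality of $\alpha$ does its work, ruling out the boundary coincidences that would otherwise shift the count by one.

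I expect the main obstacle to be bookkeeping rather than anything conceptual: keeping the strict-versus-non-strict inequalities consistent across the two rounding conventions, and checking the low-index boundary so that $N(k)$ agrees with the Beatty formula rather than being clipped to $\max(0,\lfloor k/\alpha-\beta/\alpha\rfloor)$ for small $k$ — again the role of $\alpha+\beta\ge 1$. Once the order-adjunction above is stated cleanly, both halves of the lemma follow by the same two-line manipulation, which is why the argument mirrors that of \cite[Lemma~1]{bosma2018}.
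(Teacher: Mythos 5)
Your floor/ceiling adjunction is the same computational engine as the paper's proof, but the two arguments are not aimed at the same object, and the mismatch traces back to a slip in the statement itself. The paper's proof is a pointwise chain: $n$ lies in the range of $k\mapsto\lfloor k/\alpha-\beta/\alpha\rfloor$ iff $n\alpha+\beta\le k<(n+1)\alpha+\beta$ for some integer $k$ iff $\lceil n\alpha+\beta\rceil=k$ and $\lceil(n+1)\alpha+\beta\rceil=k+1$ iff $s_n=1$ --- and the middle step explicitly uses that the interval has length $\alpha<1$ (the ``$\alpha>1$'' in the statement is the paper's typo; in every application $\alpha$ is the density of $1$'s and it is the Beatty slope $1/\alpha$ that exceeds $1$). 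So for the paper $(s_n)$ is itself a $\{0,1\}$-word, ``positions of $1$'' means $\{n: s_n=1\}$, and the lemma asserts that the rounded-down Beatty sequence \emph{enumerates} that set --- which is exactly the form needed for Theorem~\ref{thm:main3}. Taking $\alpha>1$ at face value, you were forced to reinterpret $(s_n)$ as a gap sequence and to attach to it the binary word supported on $a(n)=\lceil n\alpha+\beta\rceil$; under that reading the rounded-down Beatty sequence has slope $1/\alpha<1$, is not strictly increasing, and hence cannot be the position sequence at all. What you actually prove is that it is the counting function $N(k)$ of the positions.

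That is where the genuine gap sits: your closing sentence, that the two sequences ``determine one another,'' replaces the asserted identification by mutual invertibility, and as written your argument never identifies the $k$-th term of the rounded-down Beatty sequence with the $k$-th position of a $1$. The repair is cheap and worth making explicit. Your identity $N(k)=\lfloor k/\alpha-\beta/\alpha\rfloor$ says that the $1$'s of the increment word $t_k=N(k)-N(k-1)$ --- a rounded-down mechanical word of slope $1/\alpha$ and intercept $-\beta/\alpha$, up to an index shift --- sit exactly at the rounded-up Beatty numbers $\lceil n\alpha+\beta\rceil=\lceil n/(1/\alpha)-(-\beta/\alpha)/(1/\alpha)\rceil$. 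After the substitution $(\alpha,\beta)\mapsto(1/\alpha,-\beta/\alpha)$ this is verbatim the ``similarly'' clause of the lemma, and your sketched mirror computation (where you correctly isolate the role of irrationality in the strict-inequality conversions, a point the paper glosses over) relabels to the main clause. Two bookkeeping notes: the unclipped counting identity needs $\lfloor(k-\beta)/\alpha\rfloor\ge 0$ for $k\ge 1$, which is not literally what $\alpha+\beta\ge 1$ provides --- the paper invokes that hypothesis (itself with a sign slip) only to place the Beatty values in $\mathbb{N}$ --- so state the boundary condition you actually use rather than citing the hypothesis.
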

\begin{proof} 
Since $\alpha+\beta\geq 1$ we have that $\frac k\alpha -\frac \beta \alpha\geq 1$ if $k\in\mathbb N$.
It follows that the elements of our Beatty sequence are all natural numbers.
\[
\begin{aligned}
\exists k \geq 1 : n = \left\lfloor \frac k\alpha -\frac \beta \alpha \right\rfloor 
&\Longleftrightarrow \exists k \geq 1 : n \leq \frac k\alpha -\frac \beta \alpha < n + 1 \\
&\Longleftrightarrow \exists k \geq 1 : {n\alpha+\beta} \leq k < {(n+1)\alpha+\beta} \\
&\Longleftrightarrow \exists k \geq 1 : 
\left\lceil{n\alpha+\beta} \right\rceil = k  
\text{ and } 
\left\lceil {(n+1)\alpha+\beta} \right\rceil = k + 1 \\
&\text{\small because }\alpha<1\\
&\Longleftrightarrow 
\left\lceil {(n+1)\alpha+\beta} \right\rceil 
- \left\lceil {n\alpha+\beta} \right\rceil = 1 \\
&\Longleftrightarrow s_n = 1.
\end{aligned}
\]
The same argument gives that $ n = \left\lceil \frac k\alpha -\frac \beta \alpha \right\rceil $
if and only if $\left\lfloor {n\alpha+\beta} \right\rfloor 
- \left\lfloor {(n-1)\alpha+\beta} \right\rfloor = 1$.
\end{proof}

For instance, \seqnum{A007066} is given by
\[
\left\lceil \frac{k}{\alpha}-\frac{1-\alpha}{\alpha}\right\rceil 
\]
while \seqnum {A026356} is given by
\[
\left\lfloor \frac{k}{\alpha}-\frac{1-\alpha}{\alpha}+1\right\rfloor,
\]
for $\alpha=\frac{3-\sqrt 5}{2}$.
These lemmas lead to the following generalization of the third characterization of the BDS theorem,
which was recently proved by Cloitre. He provides many more concrete expressions for hiccup
sequences that are Beatty sequences.

\begin{theorem}[Cloitre, \cite{cloitre2025}]\label{thm:main3}
    A hiccup sequence is a Beatty sequence if $x\leq z$ and $y>1$ and $|y-z|=1$.
\end{theorem}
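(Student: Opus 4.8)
The plan is to chain together the three preceding results: the Sturmian lemma just above, which exhibits an explicit Sturmian morphism for these sequences; the correspondence between Sturmian morphisms and mechanical words via the affine maps $T_E, T_L, T_R$; and the final mechanical-to-Beatty lemma, which converts the positions of the $1$'s in a mechanical word into a Beatty sequence. First I would remove the parameter $j$. By Lemma~\ref{lem:1xyz2}, a $(j,x,y,z)$-hiccup sequence $a(n)$ equals $b(n)-j$, where $b(n)$ is the $(0,x+j,y,z)$-hiccup sequence; and since subtracting the integer $j$ from $\lfloor \alpha n + \beta\rfloor$ only replaces the offset by $\beta - j$, the Beatty property is preserved. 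So it suffices to treat $j=0$, i.e.\ to show that a purely morphic $(0,x,y,z)$-hiccup sequence with $x\le z$ and $|y-z|=1$ is Beatty (the two $j=1$ rows of Table~\ref{tbl:1} satisfy $x+j\le z$, so they fall within this scope after the shift).

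Next, by the previous lemma the sequence is generated by an explicit Sturmian morphism, written there as a composition of $G = L\circ E$, $\tilde G = R\circ E$, $H = E\circ\tilde G$, and $E$. Its fixed point is a binary word in which, by Equation~\ref{eq:6b}, each image word has exactly one symbol $1$, with lengths $z$ and $y$; hence consecutive $1$'s are separated by gaps $z$ and $y$, and the positions of the $1$'s are exactly the hiccup values. I would then push the same composition of $E,L,R$ through the transformations $T_E,T_L,T_R$ and take the fixed point $(\alpha,\beta)$ of the resulting composite map, which identifies the fixed point as a mechanical word $S_{\alpha,\beta}$. A sanity check is that $\alpha$, the density of $1$'s, is the reciprocal of the dominant eigenvalue $\lambda$ of the adjacency matrix in Equation~\ref{eq:adjacency}, so the eventual Beatty slope is $1/\alpha = \lambda = \tfrac12\big(z + \sqrt{(z-2)^2 + 4(y-1)}\big)$; for the BDS sequence ($z=2,y=3$ after the shift) this returns $1+\sqrt2$, as it must.

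Finally I would apply the mechanical-to-Beatty lemma to $S_{\alpha,\beta}$: the set of positions of its $1$'s, which is our hiccup sequence, is then a Beatty sequence of the form $\lfloor k/\alpha - \beta/\alpha\rfloor$, up to the choice of rounding. The hypothesis $x\le z$ is what makes the purely morphic Sturmian form of Equation~\ref{eq:6b} available in the first place, since the trailing block $0^{z-x}$ requires $z\ge x$; together with $|y-z|=1$ this is precisely the regime of Theorem~\ref{thm:main2}. I expect the main obstacle to be bookkeeping the rounding conventions consistently across the two subcases $y=z+1$ and $y=z-1$: I must match the rounded-up versus rounded-down mechanical word to the fixed point that starts with $0$ versus with $1$, so that the first value is $a(1)=x$, and I must verify the endpoint inequality $\alpha+\beta\ge 1$ from the computed $(\alpha,\beta)$. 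The boundary case $x=1$ with $y=z-1$, where the special letter $b$ is dropped and one reads off the other fixed point exactly as in the Sturmian lemma, should be handled separately.
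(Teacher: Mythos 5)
Your proposal is correct and follows essentially the same route as the paper, which derives Theorem~\ref{thm:main3} directly from the same chain: the Sturmian lemma built on the purely morphic form of Equation~\ref{eq:6b}, the Sturmian-morphism-to-mechanical-word correspondence via $T_E, T_L, T_R$, and the mechanical-to-Beatty lemma, with Lemma~\ref{lem:1xyz2} disposing of the parameter $j$ beforehand. Your added bookkeeping (the slope check $1/\alpha=\lambda$ against the adjacency matrix of Equation~\ref{eq:adjacency}, the rounding conventions, and the endpoint condition $\alpha+\beta\geq 1$) only makes explicit what the paper leaves implicit.
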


Out of the 21 hiccup sequences in Table~\ref{tbl:1}, there are 16
that satisfy this condition.
Out of the remaining 5 sequences, 3 have parameter $y=1$, namely
$(0,1,1,3), (0,1,1,4),$ and $(0,3,1,3)$. The morphisms that generate these
sequences are not primitive, as we saw Equation~\ref{eq:adjacency} above.
We already mentioned that the $(0,1,1,3)$-hiccup sequence \seqnum{A080578}
is a morphic sequence that is not automatic. It is example 16 in
\cite{allouche2022}. By the same argument, it is possible to show
that $(0,x,1,z)$-hiccup sequences are not automatic.
\medbreak
From the remaining two hiccup sequences that are non-Beatty
we consider the $(0,2,4,2)$-hiccup sequence \seqnum{A284753}, which
is a relatively recent addition to the OEIS. 
It is equal to $1+\seqnum{A086398}$ by Lemma~\ref{lem:1xyz2}, and has some interesting
properties. In the comments on this sequence on the OEIS, Dekking proves
that it is equal to 2 times \seqnum{A026363}. Kimberling conjectures
that $-2<(1+\sqrt 3)n-a(n)<3$, and Dekking remarks
that $(1+\sqrt 3)n-a(n)$ is bounded by a result
of Adamczewski~\cite{adamczewski2004}, where $a(n)$ is \seqnum{A284753}. We explore the sequence with the assistance of the automatic
theorem prover \texttt{Walnut}~\cite{mousavi, shallit2022}. 

Schaeffer, Shallit, and Zorcic~\cite{schaeffer2024} proved that the first-order logical theory of quadratic Beatty sequences with addition is decidable. We apply their ideas to study \seqnum{A284753}. This hiccup sequence is purely morphic and generated by $0\mapsto 01,\ 1\mapsto 0001$. We used Ollinger's licofage toolkit~\cite{licofage}, which is described in
Carton et al.~\cite{carton2025}, to convert the morphism to a Dumont-Thomas numeration
system for \texttt{Walnut}. The base of the numeration system corresponds to the lengths of
the iterated substitution words
\[
0\mapsto 01\mapsto 010001\mapsto 0100010101010001\mapsto \cdots,
\]
which are equal to $1,2,6,16,44,\ldots$, and satisfies the linear recurrence $B_{n+1}=2B_n+2B_{n-1}.$
Every number is represented by a unique word with digits $\{0,1,2,3\}$ that is accepted by the
automaton in Fig.~\ref{fig:1}. For instance, the number $39$ is represented by $1321$ in this
numeration system.
\begin{figure}
    \centering
    \includegraphics[width=0.5\linewidth]{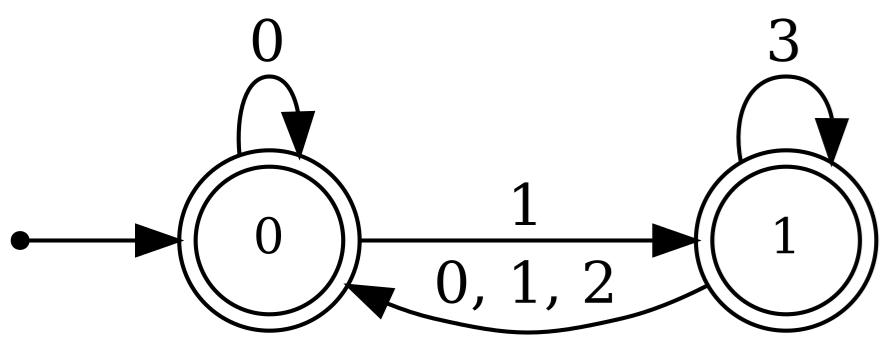}
    \caption{\small The msd expressions that are accepted in the Dumont-Thomas numeration system
    for the morphism $0\mapsto 01,\ 1\mapsto 0001$ have digits in $\{0,1,2,3\}$. This
    automaton has two accepting states. Transitions that are not included in the automaton are rejected. The expression that are accepted are
    of the form $\{0^*13^*\{0|1|2\}\}^*$, where $*$ is the Kleene star, which means that
    the expression can be repeated zero or more times.} 
    \label{fig:1}
\end{figure}
The first ten entries of the sequence are given in Table~\ref{tbl:2} in decimal and in
Dumont-Thomas representation. It is now easy to guess that the Dumont-Thomas representation
of the sequence is $(0w,w0)$ where $w$ runs over the admissible words with most initial 
digit $1$. This is in line with the results of Schaeffer, Shallit, and Zorcic, who proved
that such expressions describe Beatty sequences in Ostrowski numeration. It is a standard
exercise to define an automaton in \texttt{Walnut} that recognizes such expressions,
see \cite[p.106]{shallit2022}. We implemented it as \texttt{shift0123} and we named the
Dumont-Thomas numeration system as \texttt{msd\_dumthomabaaab}. It is a standard task for
\texttt{Walnut} to verify that the sequence is indeed of the form $(0w,w0)$ in this system.
\newline
\begin{table}[h!]
\centering
\caption{A table with the first ten entries of $(n,a(n)$ where $a(n)$ is sequence \seqnum{A284753}, in the standard decimal representation and in the Dumont-Thomas representation. }\label{tbl:2}
\begin{tabular}{|c|c||c|c|}
\hline
\multicolumn{2}{|c||}{\textbf{\ \ Decimal\ \ }} & \multicolumn{2}{|c|}{\textbf{Dumont-Thomas}} \\
\hline
$n$ & $a(n)$ & $\ \ \  n\ \ $ & $a(n)$  \\
\hline
1  & 2  & 1 &  10\\
2  & 6  & 10 & 100 \\
3  & 8  & 11 & 110 \\
4  & 10 & 12 & 120 \\
5  & 12 & 13 & 130 \\
6  & 16 & 100 &1000  \\
7  & 18 & 101 &1010  \\
8  & 22 & 110 &1100  \\
9  & 24 & 111 &1110  \\
10 & 28 & 120 &1200  \\
\hline
\end{tabular}
\end{table}

We first test if differences $a(n+1)-a(n)$ are equal to either 2 or 4. The test is carried
out within the Dumont-Thomas system with most significant digit (msd) first representation.
\smallbreak 
\noindent\texttt{eval test1 "?msd\_dumthomabaaab An,s,t (n>0 \& \$shift0123(n,s) \&
\newline \$shift0123(n+1,t)) => t=s+2|t=s+4";}
\smallbreak 
\noindent
We then test if the difference is $4$ if the index $n$ is in the sequence.
\smallbreak
\noindent\texttt{eval test2 "?msd\_dumthomabaaab An,s,t (n>0 \& \$shift0123(n,s) \& \newline Ek (\$shift0123(k,n+1)) \& \$shift0123(n+1,t)) => t=s+4";}
\smallbreak 
\noindent
Finally, we test that the difference is $2$ if the index is not in the sequence.
\smallbreak\noindent\texttt{eval test3 "?msd\_dumthomabaaab An,s,t (n>0 \& \$shift0123(n,s) \& \newline
(\~{}Ek (\$shift0123(k,n+1))) \& \$shift0123(n+1,t)) => t=s+2";}
\smallbreak 
\noindent
The sequence in \texttt{shift0123} passes the tests so that indeed 
\seqnum{A284753} is of the form $(0w,w0)$ in the Dumont-Thomas numeration system
generated by $0\mapsto 01,\ 1\mapsto 0001$.
\medbreak
In the introduction, we observed that the two sequences defined 
in Equations~\ref{eq:1} and~\ref{eq:2} have many common entries.
The following establishes the connection between these two hiccup sequences using simple \texttt{Walnut} checks.
\begin{lemma}
    We have \seqnum{A284753}$(n)=$\seqnum{A086398}$(n)+1$, and
    \begin{equation*}
        \text{\seqnum{A086398}}(n)=\begin{cases}
        \text{\seqnum{A080903}}(n)+2,&\text{if the DT representation
of $n$ ends with a $0$}\\
        \text{\seqnum{A080903}}(n), &\text{otherwise.}
    \end{cases}
    \end{equation*}
\end{lemma}
\begin{proof}
    With \texttt{Walnut}, we define a new synchronised sequence \texttt{a086398} that accepts one more than the term accepted in \texttt{shift0123} in parallel with $n$ as follows:
    \smallbreak\noindent\texttt{def a086398 "?msd\_dumthomabaaab Ek (\$shift0123(n,k) \& p+1=k)":}
    \smallbreak
    We can then verify that \texttt{a086398} indeed accepts $n$  and the terms of the $(1,1,4,2)$- hiccup sequence in parallel using the tests -- \texttt{test1},\texttt{test2}, and \texttt{test3} where \texttt{shift0123} replaced by \texttt{a086398}, and here we must modify our check -- if one less than the index is in the sequence.
    
    \noindent Now, for the conditional check, we define a DFA that accepts only the representations ending in a $0$, followed by defining a DFA accepting $n$ and \seqnum{A080903}$(n)$ in parallel.
    \smallbreak\noindent\texttt{reg endin0 msd\_dumthomabaaab "(0|1|2|3)*0";}
\smallbreak 
\smallbreak\noindent\texttt{def a080903 "?msd\_dumthomabaaab Es n>0 \& \$a086398(n,s) \& \\(\$endin0(n) => t+2=s) \& (\textasciitilde\$endin0(n) => t=s) ";}
\smallbreak 
Again, we will verify that \texttt{a086398} accepts all $n$ and the terms the $(0,1,4,2)$- hiccup sequence \seqnum{A080903} in parallel using the same tests -- \texttt{test1},\texttt{test2}, and \texttt{test3}, this time replacing \texttt{shift0123} with \texttt{a080903}.
\end{proof}

Additionally, there is a simple characterization of \seqnum{A080903}.
\begin{lemma}\label{LEM:endin1}
    The numbers in \seqnum{A080903} are those that end with a $1$ in Dumont-Thomas numeration.
\end{lemma}
\begin{proof}
    We define a DFA accepting all the representations ending with a $1$ in \texttt{Walnut} with the following code:
    \smallbreak\noindent\texttt{reg endin1 msd\_dumthomabaaab "(0|1|2|3)*1";}
\smallbreak 
\noindent
This is followed by a simple check described in the statement of the lemma, which returns \texttt{TRUE}.
\smallbreak\noindent\texttt{eval test "?msd\_dumthomabaaab As (En (n>0 \& \$a080903(n,s)) <=> \$endin1(s))";}
\smallbreak 
\end{proof}

Now that we have a characterization of \seqnum{A284753} in terms of Dumont-Thomas
numeration, we can address Kimberling's conjectured bounds on the numbers in this
sequence.
Let $\lambda=1+\sqrt 3$ and $\bar\lambda=1-\sqrt 3$. The following is a Binet
type representation for the base $B_n$ of our numeration system.

\begin{lemma}
The basis $B_n$ of the Dumont-Thomas numeration system for $0\mapsto 01$,
$1\mapsto 0001$ satisfies
    \[B_n=\frac{\lambda^n-\bar\lambda^n}{2\sqrt 3}.\]
\end{lemma}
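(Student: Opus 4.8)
The plan is to recognize the claimed formula as the Binet solution of the linear recurrence that $B_n$ already satisfies, and to pin it down by matching two initial values. First I would record that $\lambda=1+\sqrt3$ and $\bar\lambda=1-\sqrt3$ are precisely the two roots of the characteristic polynomial $t^2-2t-2$ of the recurrence $B_{n+1}=2B_n+2B_{n-1}$, since $(t-\lambda)(t-\bar\lambda)=t^2-2t-2$. Consequently both $\lambda^n$ and $\bar\lambda^n$ solve the recurrence, and so does every linear combination $A\lambda^n+C\bar\lambda^n$. The general solution of a second-order linear recurrence is determined by any two consecutive values, so it suffices to fix $A$ and $C$ from the base data and check the closed form there.

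Next I would settle the initial conditions. The base $B_n$ is the length of $\sigma^{n-1}(0)$, where $\sigma$ is the morphism $0\mapsto 01,\ 1\mapsto 0001$; thus $B_1=|0|=1$ and $B_2=|01|=2$, which opens the sequence $1,2,6,16,44,\dots$. Using $\lambda-\bar\lambda=2\sqrt3$ and $\lambda^2-\bar\lambda^2=(\lambda-\bar\lambda)(\lambda+\bar\lambda)=2\sqrt3\cdot 2=4\sqrt3$, the candidate $\tfrac{\lambda^n-\bar\lambda^n}{2\sqrt3}$ evaluates to $1$ at $n=1$ and to $2$ at $n=2$. Since it satisfies the recurrence and agrees with $B_1,B_2$, an immediate induction, or equivalently the uniqueness of solutions to the recurrence under fixed initial data, gives $B_n=\tfrac{\lambda^n-\bar\lambda^n}{2\sqrt3}$ for all $n\geq 1$.

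If instead I wanted to derive the recurrence from scratch rather than quote it, I would track the counts $(p_n,q_n)$ of zeros and ones in $\sigma^{n-1}(0)$. The morphism gives $p_{n+1}=p_n+3q_n$ and $q_{n+1}=p_n+q_n$, so the state vector is iterated by $M=\begin{bmatrix}1&3\\1&1\end{bmatrix}$, whose characteristic polynomial is again $t^2-2t-2$. By Cayley--Hamilton $M^2=2M+2I$, and multiplying by $M^{n-2}$ shows that the scalar $B_n=p_n+q_n$ inherits $B_{n+1}=2B_n+2B_{n-1}$. This letter-counting step is the only place requiring any care; everything else is the routine Binet computation above, so I expect no genuine obstacle.
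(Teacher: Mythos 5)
Your proof is correct and takes essentially the same route as the paper: both arguments are the standard Binet computation, identifying $\lambda=1+\sqrt3$ and $\bar\lambda=1-\sqrt3$ as the roots of the characteristic polynomial $t^2-2t-2$ of the recurrence $B_{n+1}=2B_n+2B_{n-1}$ and pinning down the solution by two initial values. You are in fact more careful than the paper's one-line proof, which misstates the recursion as $B_{n+1}=B_n+B_{n-1}$ (a typo, since the roots $1\pm\sqrt3$ and the values $1,2,6,16,44,\dots$ force $B_{n+1}=2B_n+2B_{n-1}$, exactly as you wrote), and your optional derivation of the recurrence from the letter-count matrix $\begin{bmatrix}1&3\\1&1\end{bmatrix}$ via Cayley--Hamilton is a sound addition the paper omits.
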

\begin{proof}
    These numbers satisfy the recursion $B_{n+1}=B_n+B_{n-1}$ with initial
    condition $B_0=0$ and $B_1=1$.
\end{proof}

\begin{lemma}
    \[B_{n+1}-\lambda B_n=\bar\lambda^{n}.\]
\end{lemma}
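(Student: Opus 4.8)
The plan is to derive the identity directly from the Binet-type formula established in the preceding lemma, so that no fresh induction is required. First I would record the two elementary facts about the roots $\lambda = 1+\sqrt3$ and $\bar\lambda = 1-\sqrt3$: their difference is $\lambda - \bar\lambda = 2\sqrt3$, which is exactly the normalizing denominator, so the Binet formula $B_n = (\lambda^n - \bar\lambda^n)/(2\sqrt3)$ can be rewritten in the symmetric shape $B_n = (\lambda^n - \bar\lambda^n)/(\lambda - \bar\lambda)$. I would also note in passing that $\lambda + \bar\lambda = 2$ and $\lambda\bar\lambda = -2$, so that both are roots of $t^2 - 2t - 2 = 0$, which is consistent with the recurrence $B_{n+1} = 2B_n + 2B_{n-1}$ quoted earlier; this observation is not strictly needed for the computation but clarifies why $\bar\lambda$ should appear on the right-hand side.

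Next I would substitute the closed form into the left-hand side $B_{n+1} - \lambda B_n$ and clear the common denominator $2\sqrt3$. The numerator becomes $(\lambda^{n+1} - \bar\lambda^{n+1}) - \lambda(\lambda^n - \bar\lambda^n)$, in which the two $\lambda^{n+1}$ terms cancel, leaving $\lambda\bar\lambda^n - \bar\lambda^{n+1} = \bar\lambda^n(\lambda - \bar\lambda) = 2\sqrt3\,\bar\lambda^n$. Dividing by the denominator $2\sqrt3$ then yields precisely $\bar\lambda^n$, as claimed.

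Since the whole argument is a single line once the closed form is in hand, there is no genuine obstacle here; the only point worth stating cleanly is the cancellation of the $\lambda^{n+1}$ terms, which is exactly what forces the surviving contribution to be a pure power of the conjugate root $\bar\lambda$. As a sanity check I would verify the indexing convention $B_0 = 0,\ B_1 = 1$ against small cases: for $n=0$ the identity reads $B_1 - \lambda B_0 = 1 = \bar\lambda^0$, and for $n=1$ it reads $B_2 - \lambda B_1 = 2 - (1+\sqrt3) = 1-\sqrt3 = \bar\lambda$, both of which check out. If one preferred an inductive proof instead, the same identity follows by writing $B_{n+1} - \lambda B_n = (2-\lambda)B_n + 2B_{n-1} = \bar\lambda B_n + 2B_{n-1}$ and applying the inductive hypothesis $B_n = \lambda B_{n-1} + \bar\lambda^{n-1}$ together with $\lambda\bar\lambda = -2$; but the direct substitution is shorter and entirely self-contained.
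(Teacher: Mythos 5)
Your proof is correct and is essentially the computation the paper itself invokes: its proof reads only ``By straightforward computation,'' and your direct substitution of the Binet formula $B_n=(\lambda^n-\bar\lambda^n)/(2\sqrt3)$, with the cancellation of the $\lambda^{n+1}$ terms and the factor $\lambda-\bar\lambda=2\sqrt3$, is exactly that computation made explicit. Your small-case checks and the inductive alternative are fine but add nothing beyond the paper's intended one-line argument.
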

\begin{proof}
    This is a straightforward computation.
\end{proof}

In order to prove Kimberling's conjecture that $-3<a(n)-\lambda n<2$ we will run
into the problem of bounding
\[
\sum_{i\geq 1} d_i\bar\lambda^i
\]
over all admissible words $w=d_1d_2\cdots$ in least digit first format in our
Dumont-Thomas numeration system. Since $\bar\lambda$ is negative and less than
$1$ in absolute value, a straightforward lower bound is 
$3\sum_{i=1}^\infty \bar\lambda^{2i-1}$ and an upper bound is
$3\sum_{i=1}^\infty \bar\lambda^{2i}$, since digits are at most $3$.
These bounds are not sharp enough. We need to work a little harder. 

\begin{lemma}
    If $w=d_1d_2d_3d_4d_5d_6$ is any admissible word in lsd format in
    our Dumont-Thomas numeration system, then $\sum_{i=1}^6 d_i\bar\lambda^i$
    is minimized by $312121$. For a word in seven digits it is maximized by $0312121$.
\end{lemma}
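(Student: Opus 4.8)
The plan is to read the statement as a finite optimization over the admissible words of the prescribed length, organized by the two states of the Dumont--Thomas automaton in Figure~\ref{fig:1}. Writing the digits of a representation from most significant to least significant traces a path from the initial state (the letter $0$); from the state $0$ the admissible digits are $\{0,1\}$, from the state $1$ they are $\{0,1,2,3\}$, and the transition lands in state $1$ precisely for $0\xrightarrow{1}1$ and $1\xrightarrow{3}1$ and in state $0$ otherwise. I would first record these rules, together with the elementary fact that $\bar\lambda=1-\sqrt3\in(-1,0)$, so $\bar\lambda^i$ is negative for odd $i$, positive for even $i$, with $|\bar\lambda^i|$ strictly decreasing. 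Thus minimizing $\sum_i d_i\bar\lambda^i$ pulls toward large digits at odd positions and small digits at even positions, but the automaton couples neighbouring digits, so these pulls cannot be satisfied independently.

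The core of the argument is a backward dynamic program along the automaton. For $0\le k\le 6$ and each state $s$, let $V_k(s)$ be the minimum of $\sum_{i=1}^k d_i\bar\lambda^i$ over admissible blocks $d_1\cdots d_k$ read from state $s$, with $V_0(s)=0$. These satisfy
\[
V_k(s)=\min_{d:\,s\xrightarrow{\,d\,}s'}\bigl(d\,\bar\lambda^{\,k}+V_{k-1}(s')\bigr),
\]
and the desired optimum is $V_6(0)$, since admissible six-digit words are exactly the length-six paths from the initial state. I would solve this recursion explicitly: there are only $2\times 6$ values, each a minimum over at most four transitions, and recording the minimizing digit at every node and tracing the optimal policy back from $(k,s)=(6,0)$ produces the word $312121$. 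The seven-digit maximizer comes from the same recursion with $\min$ replaced by $\max$; here the most significant digit forces the first transition out of state $0$, and the optimal policy reads off $0312121$. Uniqueness of each extremizer follows by checking that the optimizing digit at each node along the extremal path is strictly best, which is automatic once the competing candidate values are compared exactly as elements of $\mathbb{Z}[\sqrt3]$.

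The step I expect to be delicate is justifying that the global program is genuinely necessary, i.e.\ that one cannot simply choose digits greedily from the most significant end. Because $|\bar\lambda|\approx0.732$ is close to $1$ and digits range up to $3$, the tail $\sum_{i>k}d_i\bar\lambda^i$ can exceed $|\bar\lambda^{k}|$ in absolute value, so an apparent gain at one position can be overturned further along; this is exactly why the statement is restricted to a fixed small length. Indeed, in the optimal policy the forced choice at a negative-weight odd position is sometimes the digit $0$ rather than a large digit, precisely because moving into state $1$ would drag in a costly nonzero digit at the adjacent even position, and $312121$ and $0312121$ are the compromises the recursion selects. The only remaining care is rigour of the comparisons: at several nodes the competing values are numerically close, so I would reduce every power $\bar\lambda,\dots,\bar\lambda^7$ to the form $a_i\bar\lambda+b_i$ with $a_i,b_i\in\mathbb{Z}$ via $\bar\lambda^2=2\bar\lambda+2$, and decide each inequality from the integer coefficients and the sign of $\bar\lambda$, so that no decision in the dynamic program rests on a floating-point estimate.
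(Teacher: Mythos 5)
Your proposal is correct and, traced through, it does reproduce the stated extremizers, but it is organized quite differently from the paper's argument. The paper treats the statement as a finite problem that was ``solved numerically,'' and supplements the computation only with a heuristic: the odd powers of $\bar\lambda$ are negative, so one wants large digits at odd positions, while admissibility forces $d_{i+1}>0$ whenever $d_i>1$, which explains the alternating pattern $312121$. You instead set up a backward dynamic program over the two states of the recognizing automaton, with every comparison performed exactly in $\mathbb{Z}[\sqrt{3}]$ via $\bar\lambda^2=2\bar\lambda+2$. Your transition rules (digits $\{0,1\}$ from state $0$, digits $\{0,1,2,3\}$ from state $1$, with state $1$ entered exactly on reading $1$ from state $0$ or $3$ from state $1$) match the automaton of Figure~\ref{fig:1}, and solving your recursion does give
\[
V_6(0)=3\bar\lambda+\bar\lambda^2+2\bar\lambda^3+\bar\lambda^4+2\bar\lambda^5+\bar\lambda^6\approx -2.424
\]
with optimal word $312121$, and maximum $\approx 1.775$ for $0312121$ in seven digits. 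What your route buys is rigor and structure: no decision rests on a floating-point estimate, strictness of the optimizing digit at each node gives uniqueness for free, and the recursion scales to longer words, whereas the paper's check buys only brevity. Your remark that greedy selection from the most significant end can fail, because the tail $\sum_{i>k}d_i\bar\lambda^i$ can outweigh $|\bar\lambda^k|$, is a correct and worthwhile observation that the paper leaves implicit.

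One caveat deserves attention. In the theorem that uses this lemma, the block $d_1\cdots d_6$ is the low-order part of a representation of \emph{arbitrary} length; read most-significant-digit first, such a block is the tail of a path that may begin in either state of the automaton, not only the initial one. Your identification of admissible six-digit words with length-six paths from the initial state therefore does not quite cover the downstream application. The mismatch is harmless here: running your same dynamic program from the other start state gives $V_6(1)\approx -2.158 > V_6(0)$, and similarly the seven-digit maximum from the other state falls below $1.775$, so both extrema are attained from the initial state and the bounds in the theorem survive. But to make the lemma serve the theorem airtight you should record the optimal values at both start states, not just $V_6(0)$ and $V_7(0)$.
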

\begin{proof}
    This is a finite problem that we solved numerically. 
    The minimum rounded to three decimals is $-2.424$. To understand why
    this is the minimizing word, without going into the computation, 
    notice that the odd powers $\bar\lambda^i$ are
    negative. We would like to maximize the $d_i$ for odd $i$. That is
    why $d_1=3$ is the logical candidate. If $d_i>1$ then $d_{i+1}$ is
    positive, which explains why $312121$ is the minimizer. The maximizer
    does the same thing, but for the even indexed digits. Its value
    rounded to three decimals is $1.775$.
\end{proof}

We write $m_6=-2.424$ for the minimum, rounded to three decimals, and
$M_7=1.775$ for the maximum.

\begin{theorem}\label{THM:kimbercon0242}
    The $(0,2,4,2)$-hiccup sequence $a(n)$ satisfies
    \[
    -3< a(n)-\lambda n< 2.
    \]
\end{theorem}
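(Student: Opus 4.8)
The plan is to convert $a(n)-\lambda n$ into a convergent digit sum in the Dumont--Thomas system and then to bound that sum with the extremal word lemma. Write the place values of the numeration system as $B_1,B_2,B_3,\dots=1,2,6,16,\dots$, the lengths of the iterated substitution words. By the characterization verified with \texttt{Walnut}, the pair $(n,a(n))$ has representation $(0w,w0)$: if $w=d_1\cdots d_k$ is the msd representation of $n$, with leading digit $d_1=1$, then $a(n)$ is represented by $w0$. Hence
\[
n=\sum_{i=1}^{k}d_i\,B_{k-i+1},\qquad a(n)=\sum_{i=1}^{k}d_i\,B_{k-i+2}.
\]
Subtracting $\lambda$ times the first identity from the second and applying the relation $B_{m+1}-\lambda B_m=\bar\lambda^{\,m}$ of the preceding lemma, then reindexing by the least-significant digits $e_j:=d_{k-j+1}$, gives the key formula
\[
a(n)-\lambda n=\sum_{i=1}^{k}d_i\,\bar\lambda^{\,k-i+1}=\sum_{i=1}^{k}e_i\,\bar\lambda^{\,i},
\]
where $e_1e_2\cdots e_k$ is precisely the lsd Dumont--Thomas representation of $n$. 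This is exactly the sum treated in the extremal word lemma.

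It then remains to show that $\sum_{i\geq1}e_i\bar\lambda^{\,i}$ lies in $(-3,2)$ for every admissible word, uniformly in $k$. Since $|\bar\lambda|=\sqrt3-1<1$ the series converges, and the sign pattern of $\bar\lambda^{\,i}$ (negative for odd $i$, positive for even $i$) together with the admissibility constraint from the automaton of Figure~\ref{fig:1} forces the extremal configurations to be the eventually $2$-periodic continuations of the words $312121$ and $0312121$ found in the lemma. Summing these periodic words directly yields the sharp constants
\[
\inf=3\bar\lambda+\frac{\bar\lambda^{2}(1+2\bar\lambda)}{1-\bar\lambda^{2}}=-1-\sqrt3,\qquad
\sup=3\bar\lambda^{2}+\frac{\bar\lambda^{3}(1+2\bar\lambda)}{1-\bar\lambda^{2}}=2,
\]
each attained only in the limit, so that every finite admissible word obeys $-1-\sqrt3<\sum_i e_i\bar\lambda^{\,i}<2$. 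Because $-3<-1-\sqrt3$, the claimed bounds $-3<a(n)-\lambda n<2$ follow, with the lower bound comfortably slack.

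The main obstacle is the uniformity in $k$. The extremal word lemma only resolves the optimisation over words of length $6$ and $7$, and a naive geometric tail estimate is far too lossy: with digits up to $3$ one has $\bigl|\sum_{i>6}e_i\bar\lambda^{\,i}\bigr|\le 3|\bar\lambda|^{7}/(1-|\bar\lambda|)\approx1.3$, which added to $M_7\approx1.775$ already overshoots $2$. The real work is therefore an exchange argument upgrading the local extremality observed in the lemma's proof — that a digit exceeding $1$ is followed by a positive digit — to a global statement: one must show that any admissible word can be deformed toward the alternating pattern without decreasing (resp. increasing) the sum, and that this pattern is the periodic word whose exact value is $2$ (resp. $-1-\sqrt3$). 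Carrying this out from the admissibility structure, rather than numerically for short prefixes, is the crux of the proof.
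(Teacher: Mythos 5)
Your reduction is exactly the paper's: from the \texttt{Walnut}-verified $(0w,w0)$ description you write $n=\sum_i e_iB_i$ and $a(n)=\sum_i e_iB_{i+1}$ in lsd form, and with $B_{i+1}-\lambda B_i=\bar\lambda^{\,i}$ you obtain the same key formula $a(n)-\lambda n=\sum_i e_i\bar\lambda^{\,i}$. The gap is in the second half. You assert that admissibility ``forces the extremal configurations to be the eventually $2$-periodic continuations'' of $312121$ and $0312121$, but you never prove this, and you end by conceding that the required exchange argument ``is the crux of the proof.'' A proof that names its crucial step without carrying it out is incomplete: as written, nothing you say excludes some other admissible block pattern (runs of $3$'s, blocks like $231$, and so on) from beating the $(21)$-periodic tail, and the claims $\inf=-1-\sqrt3$ and $\sup=2$ are precisely the statements that need to be established. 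So your text only controls $a(n)-\lambda n$ for the short words covered by the extremal lemma, plus an unproven global assertion.

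That said, your diagnosis of where the difficulty lies is accurate --- more accurate, in fact, than the paper's own treatment. The paper does not aim for sharp constants: it adds to $m_6\approx-2.424$ and $M_7\approx1.775$ a worst-case geometric tail over the remaining positions and quotes the bounds $-2.667$ and $1.953$. But with the digit bound $3$ the printed tail terms $3\sum_{i\ge4}\bar\lambda^{2i-1}\approx-0.728$ and $3\sum_{i\ge4}\bar\lambda^{2i}\approx0.533$ give $-3.152$ and $2.308$, which do not prove the theorem; the quoted decimals correspond to dropping the factor $3$, and even those cannot be uniformly valid, because your exact evaluation of the periodic family is correct: since $1+2\bar\lambda=-(1-\bar\lambda^2)$, the two series collapse to $3\bar\lambda-\bar\lambda^2=-1-\sqrt3$ and $3\bar\lambda^2-\bar\lambda^3=2$, so admissible sums come arbitrarily close to $2$ and no bound such as $1.953$ can hold for all $n$. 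In other words, the upper bound $2$ in the theorem is sharp, any correct proof must exploit the automaton's structure in the tail (for instance, that in lsd order every $3$ is followed by a $1$), and completing your exchange argument would both fill your gap and repair the tail estimate in the paper's proof. Until that step is actually written out, however, your proposal does not prove the theorem.
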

\begin{proof}
    Let $n=\sum d_iB_i$ be the Dumont-Thomas representation, for which
    we have $a(n)=\sum d_iB_{i+1}$. It follows that
    \[
    a(n)-\lambda n  =\sum d_i\bar\lambda_i^{i-1}.
    \]
    The powers $\bar\lambda^i$ alternate in sign and the digits are bounded by $3$.
    It follows that
    \[
        m_6+3\sum_{i=4}^\infty \bar\lambda^{2i-1} < a(n)-\lambda n < M_7+3\sum_{i=4}^\infty \bar\lambda^{2i}.
    \]
    These bounds rounded to three decimals are $-2.667$ for the minimum and $1.953$ for
    the maximum.
\end{proof}
This solves Kimberling's conjecture that $-2 < n\left(1 + \sqrt{3}\right) - \seqnum{A284753}(n) < 3$. A natural question arising from this result is whether the results of Schaeffer, Shallit, and Zorcic on Beatty sequences can be extended to other hiccup sequences. More specifically, is it true that hiccup sequences are synchronized
with respect to Dumont-Thomas numeration?
\newpage

\section{Iterated function sequences}

The fourth characterization of the BDS theorem 
is the most intriguing. It connects
Lambert's continued fraction expansion
of $\frac 4{\pi}$ to the $(1,1,3,2)$-hiccup
sequence. 
Lambert's expansion is given by
\[
{\frac 4 \pi } = 
1+
\cfrac{1^2}{3 + 
\cfrac{2^2}{5 + 
\cfrac{3^2}{7 + 
\cfrac{4^2}{\ddots}}}}, 
\]
which is a special case of his continued fraction expansion
of the arctangent, see~\cite{bauer2005}. It is well known that the continued
fraction converges, but that the rate of convergence is
not as fast as for regular continued fractions, see
\cite[Table 2.3]{jonesthron}. 
A shorthand notation for Lambert's continued fraction is
\[
\frac 4 \pi =1+\mathop{\mbox{\Large\bfseries K}}\limits_{j=1}^{\infty} \frac{j^2}{2j+1}
\]
The fractions $\frac{j^2}{2j+1}$ are called \emph{partial fractions}
and the truncated expansions $1+\mathop{\mbox{\Large\bfseries K}}\limits_{j=1}^{n} \frac{j^2}{2j+1}$
are called \emph{convergents}.
The initial convergents are \[\frac 1 1, \frac 4 3, \frac {24}{19}, \frac{204}{160},\ldots.\]
The numerators are \seqnum{A012244}, but the denominators are not included in the OEIS. 
The convergents are defined by cutting subsequent tails from the continued fraction. 
These tails are given by
\[
s_{n} =2n-1+\mathop{\mbox{\Large\bfseries K}}\limits_{j=n}^{\infty} \frac{j^2}{2j+1},
\] 
starting from $s_1=\frac 4 \pi=1+\frac 1{s_2}$.
Each next tail is connected to the previous tail by
\[
s_n=2n-1+\frac {n^2}{s_{n+1}}.
\]
The fourth characterization of the $(1,1,3,2,)$-hiccup sequence 
in the BDS theorem is that it is equal to 
$\left\lfloor s_n+\frac 12\right\rfloor$. It is suprising that
an iterative sequence that starts from $\frac 4\pi$ produces
the same result as a Beatty sequence with slope $1+\sqrt 2$.
Bosma, Dekking, and Steiner write that they were only convinced
after a numerical verification of the initial $130,000$ terms
of the sequence.

In their proof of the fourth characterization, Bosma, Dekking, and
Steiner consider the family of continued fractions of polynomial type given by
\begin{equation}\label{eq:Kk}
k-1+\mathop{\mbox{\Large\bfseries K}}\limits_{j=1}^{\infty} \frac{j^2}{(j+1)k-1},
\end{equation}
for natural numbers $k$. 
For $k=2$ we have the continued fraction of $\frac 4\pi$.
For $k=1$, Beukers~\cite{Beukers} found the following
elegant expression in terms of the golden mean $\phi=\frac{1+\sqrt 5}2$
and $\eta=\frac 1{1+\phi^2}$
\begin{equation}\label{eq:n2n}
\frac{\phi}
{\displaystyle \sum_{n=1}^\infty \dfrac 1{n+\eta}\left(\dfrac {-1}{\phi^2}\right)^n}
\;=\;
\cfrac{1^2}{1 + 
\cfrac{2^2}{2 + 
\cfrac{3^2}{3 + 
\cfrac{4^2}{\ddots}}}}.
\end{equation}
The numerators
of the convergents of this continued fraction
are \seqnum{A288952}, and appear in a study by Genitrini et al. on tree enumeration~\cite{genitrini2020}.
The tails of the continued fraction in Equation~\ref{eq:Kk} satisfy
\begin{equation}\label{eq:gaussmap}
s_n=kn-1+\frac {n^2}{s_{n+1}}.
\end{equation}
Expressing $s_{n+1}$ in terms of $s_n$ this is
\[
s_{n+1}=\frac {n^2}{s_{n}-kn+1}.
\]
In the final line of their
paper, Bosma, Dekking, and Steiner conjecture a remarkable relation
between this iterative sequence and a Beatty sequence that has 
slope $\alpha_k$ such that $\alpha_k^2=k\alpha_k+1$.
Such a quadratic number is known as a 
\emph{metallic mean}~\cite{metallic}.

\begin{conjecture}[Bosma, Dekking, and Steiner]\label{con:bds}
Let $\alpha_k$
be the $k$-th metallic mean and
let $s_1$ be the unique positive real number such that
all $s_{n+1}=\frac{n^2}{s_n-kn+1}$
are positive, where we suppress $k$ in our notation
of the sequence $s_n$. The rounded-down sequence  $\lfloor s_n\rfloor$
is equal to the Beatty sequence \[\left\lfloor \alpha_k.n-\frac{1+\alpha_k}{2\alpha_k-k} \right\rfloor.\]
\end{conjecture}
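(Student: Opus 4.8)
The plan is to reduce the whole statement to controlling the deviation of $r_n$ from the conjectured linear profile $g(n):=\alpha_j n-c$, where $c=\frac{1+\alpha_j}{2\alpha_j-j}$, working directly from the defining relation $r_{n+1}(r_n-jn+1)=n^2$. Write $\bar\alpha_j:=j-\alpha_j$ for the conjugate root, so that $\alpha_j+\bar\alpha_j=j$, $\alpha_j\bar\alpha_j=-1$, and $2\alpha_j-j=\alpha_j-\bar\alpha_j=\sqrt{j^2+4}$. The first step is to substitute $r_n=g(n)$ into the relation. The metallic-mean identity $\alpha_j^2=j\alpha_j+1$ forces the $n^2$-coefficient of $g(n+1)\bigl(g(n)-jn+1\bigr)$ to equal $1$, and the particular value of $c$ is exactly what annihilates the $n$-coefficient, leaving only a constant residual:
\[
g(n+1)\bigl(g(n)-jn+1\bigr)=n^2+\delta,\qquad \delta=(\alpha_j-c)(1-c).
\]
Thus $g$ solves the recursion up to the constant error $\delta$ (for $j=2$ one gets $\delta=-\tfrac14$), and the problem collapses to bounding $\rho_n:=r_n-g(n)$. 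This computation also supplies the exact constant $c$ appearing in the conjectured Beatty formula.

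Setting $D_n:=g(n)-jn+1=-\bar\alpha_j n+(1-c)$, the recursion rearranges to $\rho_{n+1}=\dfrac{-\delta-g(n+1)\rho_n}{D_n+\rho_n}$. Since $-\bar\alpha_j=1/\alpha_j$, we have $g(n+1)/D_n\to\alpha_j^2>1$, so forward iteration \emph{expands} deviations: $g$ is an unstable approximate solution, which is precisely why a generic $r_1$ eventually makes $r_n$ negative and why positivity pins down a unique $r_1$. I would make existence and uniqueness of this recessive value precise by unrolling $r_n=jn-1+n^2/r_{n+1}$ into the continued fraction
\[
r_1=(j-1)+\cfrac{1^2}{(2j-1)+\cfrac{2^2}{(3j-1)+\cfrac{3^2}{\ddots}}},
\]
whose convergence and minimal-solution property follow from standard theory (Pincherle, \'Sle\-szy\'nski--Pringsheim); for $j=2$ this is the value $4/\pi$ identified in the BDS theorem (Theorem~\ref{thm:1}). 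Because the backward map $r_{n+1}\mapsto jn-1+n^2/r_{n+1}$ contracts with factor $\to\alpha_j^{-2}<1$, summing the forcing $\delta$ along the tail gives $\rho_n=-\frac{\delta}{\alpha_j n}+O(n^{-2})$; in particular $\delta<0$ yields $\rho_n>0$ for all large $n$ (for $j=2$, $\rho_n\sim\frac{1}{4\alpha_j n}$).

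It then remains to pass from $r_n=g(n)+O(1/n)$ to the equality $\lfloor r_n\rfloor=\lfloor g(n)\rfloor$, and this is where the genuine difficulty lies. Since $\rho_n>0$, the floors can differ only when $g(n)$ sits just below an integer, i.e.\ when the upward gap $1-\{\alpha_j n-c\}$ drops below $\rho_n$. The two competing quantities are both of size $\Theta(1/n)$: $\alpha_j$ is a quadratic, hence badly approximable, irrational, so by its eventually periodic continued fraction $\{\alpha_j n-c\}$ approaches the integer boundary at the sharp rate $\Theta(1/n)$ along convergent denominators---exactly the order of $\rho_n$. The crux is therefore the borderline \emph{inhomogeneous} Diophantine inequality
\[
\liminf_{n\to\infty} n\bigl(1-\{\alpha_j n-c\}\bigr)>\frac{-\delta}{\alpha_j},
\]
a comparison of explicit algebraic constants rather than a soft estimate, and this is what keeps the statement at the level of a conjecture.

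I see two routes to close this final gap. The analytic route is to evaluate the left-hand $\liminf$ exactly from the periodic continued fraction of $\alpha_j$ together with the special algebraic value of $c$, and verify the strict inequality for each $j$. The route I regard as more robust, and most in keeping with the present paper, is to bypass asymptotics: combine the Beatty characterisation of Theorem~\ref{thm:main3} with the Ostrowski/Dumont--Thomas numeration exploited earlier in the \texttt{Walnut} computations, reducing the boundary cases to a finite, automaton-checkable condition. Concretely, one would prove an exact trapping invariant $\lfloor g(n)\rfloor<r_n<\lfloor g(n)\rfloor+1$ by induction, using that the M\"obius recursion is monotone decreasing and that the side of the integer on which $g(n)$ falls is governed by the very membership rule $n\in\{a(k)\}$ defining the hiccup increments; the numeration system then encodes that rule as a regular language, turning the induction into a decidable verification. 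The main obstacle is thus not the asymptotics of $r_n$, which are straightforward, but the sharp integer-boundary comparison, which I expect to require the numeration-theoretic machinery rather than analysis alone.
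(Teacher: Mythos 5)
The statement you are trying to prove is Conjecture~\ref{con:bds}: the paper does \emph{not} prove it. The paper only offers a heuristic identification of the constants (expanding the continued-fraction shift map $r_n = nj-1+n^2/r_{n+1}$ as $r_n = n\alpha+\beta+O(1/n)$ and matching the terms of order $n$ and $1$), together with the results of Bosma--Dekking--Steiner for the case $j=2$. Your proposal is a sound and substantially more detailed roadmap than anything in the paper: the residual computation $g(n+1)\bigl(g(n)-jn+1\bigr)=n^2+\delta$ with $\delta=(\alpha_j-c)(1-c)$ is correct (and $\delta=-\tfrac14$ for $j=2$ checks out), the instability of forward iteration and the Pincherle/minimal-solution characterization of $r_1$ match how uniqueness is handled in \cite{bosma2018}, and you correctly isolate the genuine obstruction: a borderline one-sided inhomogeneous Diophantine inequality comparing $\liminf_n n\bigl(1-\{\alpha_j n-c\}\bigr)$ with $\lim_n n\rho_n$, both $\Theta(1)$. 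But you never establish that inequality, so this is not a proof --- which is exactly why the statement is a conjecture. Two further points you gloss over: the floor equality must hold for \emph{all} $n\geq 1$, so a $\liminf$ statement only controls the tail and finitely many initial cases remain; and the conjecture is a statement for every $j$, so ``verify the strict inequality for each $j$'' needs a uniform argument, not a case check.

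There is also a concrete computational slip in your asymptotics for $\rho_n=r_n-g(n)$, and it matters because the open step is an exact comparison of constants. The backward map contracts with \emph{signed} factor tending to $-\alpha_j^{-2}$, so summing the forcing $-\delta/g(k+1)\approx -\delta/(\alpha_j k)$ along the tail produces an alternating geometric series:
\[
\rho_n \;=\; \frac{-\delta}{\alpha_j n}\sum_{m\geq 0}\bigl(-\alpha_j^{-2}\bigr)^m + O(n^{-2})
\;=\; \frac{-\delta\,\alpha_j}{(1+\alpha_j^2)\,n}+O(n^{-2}),
\]
whereas you kept only the first term, claiming $\rho_n=-\delta/(\alpha_j n)+O(n^{-2})$. (Equivalently: matching $\rho_n=\epsilon/n$ in the linearized recursion $\rho_{n+1}\approx\alpha_j(-\delta-\alpha_j\epsilon)/n$ forces $\epsilon(1+\alpha_j^2)=-\alpha_j\delta$.) For $j=2$ this gives $n\rho_n\to \alpha/\bigl(4(1+\alpha^2)\bigr)\approx 0.0884$, not your $1/(4\alpha)\approx 0.1036$; iterating from $r_1=4/\pi$ confirms $n\rho_n\approx 0.08$--$0.09$. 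Accordingly the right-hand side of your displayed threshold inequality should be $\alpha_j(-\delta)/(1+\alpha_j^2)$ rather than $-\delta/\alpha_j$; the corrected target is smaller, hence weaker, but the inequality still remains unproved, and with it the conjecture. Your suggested automaton/numeration route for closing the boundary cases is plausible for a fixed $j$ and is in the spirit of the paper's \texttt{Walnut} verification of the $(0,2,4,2)$ sequence, but it cannot by itself dispose of all $j$ at once.
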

\vspace{0.5cm}
These are not hiccup sequences, and none of these sequences appear in the OEIS.
It is shown in \cite{bosma2018} that $s_n-n\alpha-\beta$ is positive
and converges to zero with order $O\left(\frac 1n\right)$,  where
we write $\alpha$ for the $k$-th metallic mean, and $\beta=-\frac{1+\alpha}{2\alpha-k}$. These constants $\alpha$
and $\beta$ can be retrieved from Equation~\ref{eq:gaussmap} if we
expand it up to order $O\left(\frac 1n\right)$ as
\[
n\alpha+\beta+O\left(\frac 1 n\right)=kn-1+\frac {n}{\alpha}-\frac{\alpha+\beta}{\alpha^2}+O\left(\frac 1 n\right).
\]
Equating the terms of order $n$ to zero gives that $\alpha$ is the
$j$-th metallic mean. By doing the same for the terms of order $1$,
we get that $\beta=-\frac{\alpha^2+\alpha}{\alpha^2+1}$, which 
gives the expression for $\beta$ in the conjecture.

We extend the one-parameter family in Equation~\ref{eq:Kk} to
the two-parameter family
\begin{equation}\label{eq:zv}
u+v+\mathop{\mbox{\Large\bfseries K}}\limits_{j=1}^{\infty} \frac{j^2}{(j+1)u+v},
\end{equation}
for constants $u$ and $v$.
Its tails satisfy
\[
s_n=nu+v+\frac {n^2}{s_{n+1}}.  
\]
If we assume that $s_n$ converges to $\alpha n+\beta$ with order $O\left(\frac 1n\right)$,
and if we take $u$ to be a natural number,
then $\alpha$ is the $u$-th metallic mean and 
\[
\beta=\frac{v\alpha^2-\alpha}{\alpha^2+1}.
\]
The generalization of Conjecture~\ref{con:bds} then says that the rounded down tails
of the continued fraction give the Beatty sequence with slope $\alpha$ and offset $\beta$.

The continued fraction in Equation~\ref{eq:zv} can be expressed
in terms of hypergeometric functions.
We are indebted to Frits Beukers for pointing
that out to us~\cite{Beukers}. 
The following equality follows from \cite[Eq. 15.7.5]{dlmf}.
\[
\dfrac{c}{{}_2F_1(1,1;c+1;z)}
= c-z
  +\mathop{\mbox{\Large\bfseries K}}\limits_{j=1}^{\infty}
  \dfrac{j^2\,z(1-z)}{c-z+j(1-2z)},
\]
which converges for Re$(z)<\frac 12$,
It is not hard to verify that
\[
\dfrac{\lambda\cdot c}{{}_2F_1(1,1;c+1;z)}
= \lambda(c-z)
  +\mathop{\mbox{\Large\bfseries K}}\limits_{j=1}^{\infty}
  \dfrac{(\lambda j)^2\,z(1-z)}{\lambda(c-z+j(1-2z))}.
\]
If we set
\[
\lambda=\sqrt{4+u^2}\;,\; z=\dfrac 12-\dfrac{u}{2\lambda}\;,\;c=z+\dfrac{u+v}{\lambda}, 
\]
then 
\[
\dfrac{\lambda\cdot c}{{}_2F_1(1,1;c+1;z)}
= u+v+\mathop{\mbox{\Large\bfseries K}}\limits_{j=1}^{\infty} \frac{j^2}{(j+1)u+v}.
\]
For the continued fraction in Equation~\ref{eq:n2n}, the parameters
are $\lambda=\sqrt 5\;,\; z=\frac 12-\frac{1}{2\sqrt 5}\;,\; c=z$.
Beukers' formula in Equation~\ref{eq:n2n} follows from the transformation
\[
{{}_2F_1(a,b;c;z)}=\dfrac{1}{(1-z)^a}\cdot{{}_2F_1(a,c-b;c;z/(z-1))}.
\]
The continued fraction is therefore equal to
$
\tfrac{\sqrt{5}\;(1-\eta)\;\eta}
{{}_2F_1(1,\eta;\eta+1;\eta/(\eta-1))}\;=\;\tfrac{\phi}
{\sum_{n=1}^\infty \tfrac 1{n+\eta}\left(\tfrac {-1}{\phi^2}\right)^n}.
$

\medbreak
We now restrict our attention to one particular case of the conjecture.
The lower Wythoff sequence \seqnum{A000201}, which is the
$(1,1,2,1)$-hiccup sequence, is the Beatty sequence$\lfloor n\phi\rfloor$ for
the golden mean $\phi$, i.e., the first metallic mean. 
According to the generalized conjecture, if we take
$u=1$ and $v=\frac 1\phi=\phi-1$, then the rounded down tails of the
continued fraction are equal to the lower Wythoff sequence.
We prove that the conjecture holds in this particular case.
The backward recurrence for these values of $u$ and $v$ is given by $s_n=n+1/\phi+\frac {n^2}{s_{n+1}},$
which in forward recurrence is equal to
\begin{equation*}
    s_{n+1}=\frac {n^2}{s_{n}-n-1/\phi}.  
\end{equation*}
It is convenient to scale the tails to $r_n=\frac{s_n}n$, which satisfy the recurrence
\begin{equation}\label{eq:gmrecur}
    r_{n+1}=\frac{n}{n+1}\cdot \frac {1}{r_{n}-1-\frac{1}{n\phi}}.  
\end{equation}

\begin{lemma}\label{lem:uniq}
There is a unique sequence of positive real numbers $(r_n)_{n \geq 1}$ that satisfies the recurrence~\ref{eq:gmrecur}.
Moreover, this sequence is contained in $(1+\frac 1{n\phi},2)$.
\end{lemma}
\begin{proof}
    We first prove that a positive sequence $r_n$ must be contained in $(1+\frac{1}{n\phi},2)$. 
    The lower bound $r_n>1+\frac 1{n\phi}$ is required to ensure that $r_{n+1}>0$.
    If $r_n\geq 2$ and $n>1$, then
    \begin{equation}\label{ineq:1}
    r_{n+1}\leq\frac{n}{n+1}\cdot \frac {1}{1-\frac{1}{n\phi}}=\frac{n^2\phi}{n^2\phi+n(\phi-1)-1}<1,
    \end{equation}
    which is impossible since we have already established that $r_{n+1}>1$. If $r_1\geq 2$, then
    $r_2\leq\frac 12\cdot \frac 1{1-\frac 1\phi}=1+\frac{1}{2\phi}$, which is again impossible.

    We denote the functions that produce the recurrence by \[f_n(x)=\frac{n}{n+1}\cdot \frac {1}{x-1-\frac{1}{n\phi}},\] 
    and we denote $I_n=(1+\frac 1{n\phi},2)$. 
    If $n>1$, then by inequality~\ref{ineq:1} the image $f_n(I_n)$ contains $(1,\infty)$. Therefore, $f_n^{-1}(I_{n+1})\subset I_n$ and $f_n^{-1}(I_{n+1})$ is an open subinterval in which the end points are elements of $I_n$.
    We have a descending chain
    \begin{equation}\label{eq:chain}
        I_2\supset f_2^{-1}(I_3)\supset f_2^{-1}(f_3^{-1}(I_4))\supset \cdots        
    \end{equation}
    such that the end-points of each next interval are in the previous interval. 
    Therefore the chain has non-empty intersection. We need to show that this intersection is a singleton $\{r_2\}$.
    
    On the interval $I_n$ the derivate of $f_n$ is bounded by below by its value in $2$: 
    \[
    |f_n'(x)|>\frac {n}{n+1}\cdot\frac{1}{\left(1-\frac 1{n\phi}\right)^2}=\frac{1}{(1+\frac 1n)(1-\frac {2}{n\phi}+\frac 1{n^2\phi^2})}=\frac 1{1-\frac{2-\phi}{n\phi}-\frac {2\phi-1}{n^2\phi^2}+\frac 1{n^3\phi^2}},\] 
    which is greater than $1+\frac{2-\phi}{n\phi}$. Each $I_{n+1}$ has length less than $1$ and the inverse
    $f_n^{-1}$ shrinks its length by a factor that is greater than $1+\frac{2-\phi}{n\phi}$. The product
    of all these factors is infinite and therefore the intersection is indeed a singleton $\{r_2\}$.
    Now $f_1(I_1)\supset I_2$ since $f_1(2)=1+\frac 1{2\phi}$ 
    and there is a unique $r_1\in I_1$ such that $f_1(r_1)=r_2$.
\end{proof}

To prove that $\lfloor s_n\rfloor =\lfloor n\phi\rfloor$ we need to establish that there
is no integer $m$ in between $s_n$ and $n\phi$. This is equivalent to proving that there
is no rational $\frac mn$ in between $r_n$ and $\phi$. To do this, we need to narrow down the
intervals $I_n$ that we found in the previous lemma, which is a straightforward but headache
causing computation. 

\begin{lemma}\label{lem:estimate}
Let $\psi=-\frac {1}{\phi}$.
For each $n\geq 1$ we have $r_n\in \left(\phi+\frac{1}{n(n+1/2)\sqrt 5},\phi+\frac{1-1/{(5n^2)}}{n^2\sqrt 5}\right)$.
\end{lemma}
\begin{proof}
    Let $J_n=\left(\phi+\frac{1}{n(n+1/2)\sqrt 5},\phi+\frac{1-\psi^{4n}}{n^2\sqrt 5}\right)$. We prove that \[f_n(J_n)\supset J_{n+1}.\] Since $f_n$ is decreasing, we need to verify the two inequalities
    \begin{equation}\label{left}
    f_n\left(\phi+\frac{1}{n(n+1/2)\sqrt 5}\right)>\phi+\frac{1-\psi^{4(n+1)}}{(n+1)^2\sqrt 5},        
    \end{equation}
    and
    \begin{equation}\label{right}
    f_n\left(\phi+\frac{1-\psi^{4n}}{n^2\sqrt 5}\right)<\phi+\frac{1}{(n+1)(n+3/2)\sqrt 5}.
    \end{equation}
    We have that 
\begin{align*}
f_n(\phi + x) &= \frac{n}{n+1} \cdot \frac{1}{\phi + x - 1 - \frac{1}{n\phi}} 
= \frac{n}{n+1} \cdot \frac{1}{x + \frac{n-1}{n\phi}} \\
&= \frac{n^2\phi}{(n+1) n \phi x + n^2 - 1} 
= \frac{\phi}{1 + \frac{\phi x (n+1)}{n} - \frac{1}{n^2}}\\
&=\frac{\phi}{1 - \frac{1-\phi x n(n+1)}{n^2}}.
\end{align*}
    For the first inequality~\ref{left} we have $x=\frac{1}{n(n+1/2)\sqrt 5}$ and therefore we need to verify that
    \[
    \frac{\phi}{1-\frac {\sqrt 5(n+1/2) -\phi(n+1)}{n^2(n+1/2)\sqrt 5}}>\phi+\frac{1-1/(5(n+1)^2)}{(n+1)^2\sqrt 5}.
    \]
    We will prove the sharper inequality
    \[
    \frac{\phi}{1-\frac {\sqrt 5(n+1/2) -\phi(n+1)}{n^2(n+1/2)\sqrt 5}}>\phi+\frac{1}{(n+1)^2\sqrt 5}.
    \]
    This can be rewritten as
    \[
    \frac {\phi(\sqrt 5(n+1/2) -\phi(n+1))}{n^2(n+1/2)\sqrt 5}>\frac{1}{(n+1)^2\sqrt 5}\left(1-\frac {\sqrt 5(n+1/2) -\phi(n+1)}{n^2(n+1/2)\sqrt 5}\right).
    \] 
    Since $\phi(\sqrt 5 -\phi)=1$ this is
    \[
    \frac{n+1-\phi/2}{n+1/2}>\frac{n^2}{(n+1)^2}\left(1-\frac {\sqrt 5(n+1/2) -\phi(n+1)}{n^2(n+1/2)\sqrt 5}\right).
    \]
    Both factors on the right-hand side are less than $1$ and the inequality follows from
    \[
    \frac{n+1-\phi/2}{n+1/2}>\frac{n^2}{(n+1)^2},
    \]
    which reduces to $(n+1-\phi/2)(n+1)^2>n^2(n+1/2)$. It is straightforward to check that this holds
    by comparing the coefficients of the polynomials.
    
    For the
    second inequality in~\ref{right} we have that $x=\frac{1-1/(5n^2)}{n^2\sqrt 5}$ and we need to verify that 
    \[
    \frac{\phi}{1-\frac {n\sqrt 5 -\phi(n+1)(1-1/(5n^2))}{n^3\sqrt 5}}<\phi+\frac{1}{(n+1)(n+3/2)\sqrt 5}.
    \]
    For $n=1$ the left-hand side is $1.397...$ and the right-hand side is $1.707...$. For $n=2$ 
    the left-hand side is $1.605...$ and the right-hand side is $1.660...$. 
    We may assume that $n>2$.
    
    The inequality can be rewritten as
    \[
    \frac{\phi(n\sqrt 5 -\phi(n+1)(1-1/(5n^2)))}{n^3\sqrt 5}<\frac{1}{(n+1)(n+3/2)\sqrt 5}\left(1-\frac {n\sqrt 5 -\phi(n+1)(1-1/(5n^2))}{n^3\sqrt 5}\right),
    \]
    which simplifies to
    \[
    \frac{
    n-\phi^2+\phi^2/{(5n)}+\phi^2/(5n^2)}n<\frac{n^2}{(n+1)(n+3/2)}\left(1-\frac {n/\phi -\phi +\phi/(5n)+\phi/(5n^2)}{n^3\sqrt 5}\right).
    \]
    The left-hand side is \[1-\phi^2\cdot \frac{1}{n}+O\left(\frac 1{n^2}\right)\] and the right-hand side is \[1-\frac 52 \cdot \frac 1n+O\left(\frac 1 {n^2}\right)\] and therefore the inequality holds for sufficiently large $n$. We need to verify that it
    holds for all $n$. On the right-hand side of the equation, the factor 
    \[
    1-\frac {n/\phi -\phi +\phi/(5n)+\phi/(5n^2)}{n^3\sqrt 5}
    \]
    is less than $1$ if $n>2$. If we ignore that factor, we get
    \[
    \frac{
    n-\phi^2+\phi^2/{(5n)}+\phi^2/(5n^2)}n<\frac{n^2}{(n+1)(n+3/2)}.
    \]
Since $\frac{n^2}{(n+1)(n+3/2)}>1-\frac 52 \cdot\frac 1n$ it suffices to show that
    \[
    \frac{
    n-\phi^2+\phi^2/{(5n)}+\phi^2/(5n^2)}n=1-\phi^2\cdot\frac 1n +\frac{\phi^2}{5n^2+5n^3}<1-\frac 52 \cdot\frac 1n,
    \]
    We arrive at
    \[
    \frac{\phi^2}{5n+5n^2}<\frac 1\phi-\frac 12 \text{ for }n>2.
    \]
    It suffices to check this at $n=3$, when the left-hand side is $0.04...$ and the right-hand side is $0.11..$.
    The inequality holds.
\end{proof}

The intervals $J_n$ are disjoint and descend on $\phi$, which implies that
$r_n$ is a decreasing sequence with limit point $\phi$. We need to show
that $(\phi,r_n)\subset (\phi,\phi+\frac{1-1/(5n^2)}{\sqrt 5 n^2})$ does not contain a rational of denominator $n$.
\begin{lemma}
    No rational satisfies $0<\frac mn-\phi<\frac{1-1/(5n^2)}{\sqrt 5 n^2}$.
\end{lemma}
\begin{proof}
    The convergents of $\frac 1\phi$ are ratios of consecutive Fibonacci
    numbers $\frac {F_{k}}{F_{k+1}}$. By Cassini's identity
    \[
    F_k^2-F_{k-1}\cdot F_{k+1}=(-1)^k,
    \]
    which means that consecutive convergents are Farey neighbors. 
    The convergents $\frac {F_{2k}}{F_{2k+1}}$ are smaller than $\frac 1\phi$ and the
    convergents $\frac {F_{2k-1}}{F_{2k}}$ are greater.
    By Binet's formula $F_k=\frac{\phi^k-\psi^k}{\sqrt 5}$ with $\psi=-1/\phi$
    \[
    \frac {F_{2k-1}}{F_{2k}}-\frac 1\phi=\frac{(\phi-\psi)\psi^{2k}}{\phi^{2k}-\psi^{2k}}=\frac{\psi^{2k}}{F_{2k}}.
    \]
    Now $\psi^{2k}F_{2k}=\frac{1-\psi^{4k}}{\sqrt 5}$ and therefore
    \[
    \frac {F_{2k-1}}{F_{2k}}-\frac 1\phi=\frac {1-\psi^{4k}}{\sqrt 5 F_{2k}^2}.
    \]
    Suppose that $0<\frac mn-\phi<\frac{1-1/(5n^2)}{\sqrt 5 n^2}$,
    which is equivalent to $0<\frac {m-n}n-\frac 1\phi<\frac{1-1/(5n^2)}{\sqrt 5 n^2}$.
    Let $F_{2k}$
    be the largest even-index Fibonacci number $\leq n$. We will show that $\frac{m-n}n$
    is closer to $\frac 1\phi$ than $\frac{F_{2k-1}}{F_{2k}}$, which is impossible,
    since its first Farey neighbor that is larger than $\frac 1\phi$ has denominator
    $F_{2k+2}$. We will show that
    \[
    \frac{1-1/(5n^2)}{\sqrt 5 n^2}<\frac{1-\psi^{4k}}{\sqrt 5 F_{2k}^2}.
    \]
    Now $\psi^{4k}F_{2k}^2<\frac 15$ and therefore it suffices to show that
    \[
    \frac{1-1/(5n^2)}{\sqrt 5 n^2}\leq\frac{1-1/(5F_{2k}^2)}{\sqrt 5 F_{2k}^2}.
    \]
    This follows from the fact that $f(x)=\frac{1-1/(5x^2)}{x^2}$ is
    descending for $x>1$.
\end{proof}
These lemmas prove our result.
\begin{theorem}
    The sequence of rounded down tails $\lfloor s_n\rfloor$ for the continued fraction
    in Equation~\ref{eq:zv} with $z=1$ and $v=1/\phi$
    is equal to the lower Wythoff sequence. 
\end{theorem}

This settles a particular case of the BDS conjecture. It depends on the computation in Lemma~\ref{lem:estimate},
which shows that the tails of the continued fraction of polynomial type with partial fractions $\frac{n^2}{n+1/\phi}$ converge
to $\phi$ slightly faster than the convergents of its regular continued fraction. A proof of the BDS
conjecture requires a conceptual explanation rather than a computation.

\section{Conclusion and acknowledgements}

We have extended the considerations by Bosma, Dekking, and Steiner~\cite{bosma2018} 
from the $(1,1,3,2)$-hiccup sequence to other hiccup sequences.
There is
still much to be explored, most notably the conjectured relation between
polynomial continued fractions and Beatty sequences.

\medbreak
We would like to thank Frits Beukers, Benoit Cloitre, 
and Slade Sanderson for useful suggestions.
Gandhar Joshi was partly supported by LMS travel grant SC7-2425-18.
We conclude with a table of new hiccup sequences that we encountered in our
study.

\begin{table}[h!]
\centering
\small
\begin{tabular}{|c|c|c|c|c|}
\hline
\textbf{OEIS} & $j$ & $x$ & $y$ & $z$ \\
\hline
\seqnum{A000201} &1&1&2&1\\
\seqnum{A001950} &1&2&2&3\\
\seqnum{A003156} &1& 1 & 3 & 1 \\
\seqnum{A026352}&1&1&2&3\\
\seqnum{A026356}&0&2&2&3\\
\seqnum{A284753}&0&2&4&2\\
\hline
\end{tabular}

\caption{Table of $(j,x,y,z)$-hiccup sequences that are not yet recognized as
such in the OEIS at the time of writing. 
}\label{tbl:end}
\end{table}

\bibliographystyle{siam}
\bibliography{cloitre}

\bigskip
\hrule
\bigskip
\noindent 2020 {\it Mathematics Subject Classification}:
Primary 11B85 ; Secondary 11A55.

\noindent \emph{Keywords}: hiccup sequence, morphic sequence, Beatty sequence, continued fraction. 

\bigskip
\hrule
\bigskip

\end{document}